\newcommand{\gt}[1]{\mathfrak{#1}}
\newcommand{\comment}[1]{}
\newcommand{\Aut}{\operatorname{Aut}}
\newcommand{\tw}{\mathrm{tw}}
\newcommand{\odd}{\operatorname{odd}}
\newcommand{\tr}{\operatorname{tr}}
\newcommand{\Cliff}{{\textsl{Cliff}}}
\newcommand{\CM}{{\text{CM}}}
\newcommand{\Weyl}{{\textsl{Weyl}}}
\newcommand{\WM}{{\text{WM}}}
\newcommand{\Div}{\operatorname{Div}}
\newcommand{\xmod}{{\rm \;mod\;}}
\newcommand{\SO}{\operatorname{\textsl{SO}}}    		
\newcommand{\Spin}{\operatorname{\textsl{Spin}}}		
\newcommand{\CC}{{\mathbb C}}
\newcommand{\PP}{{\mathbb P}}
\newcommand{\ZZ}{{\mathbb Z}}
\newcommand{\QQ}{{\mathbb Q}}
\newcommand{\lab}{{\langle}}    
\newcommand{\rab}{{\rangle}}    
\newcommand{\vv}{\mathbf v} 
\newcommand{\msn}{M^{s\natural}} 	
\renewcommand{\LL}{\Lambda}	
\newcommand{\Co}{\textsl{Co}}	
\newcommand{\KT}{{\rm K3}}
\newcommand{\rjg}{\varrho} 
\newcommand{\R}{{H}} 
\newcommand{\Hzw}{{\bf H}} 
\newcommand{\uA}{\mathrel{\raisebox{\depth}{\rotatebox{180}{$\!\!A$}}}}
\newtheorem{thm}{Theorem}[section]
\newtheorem{lem}[thm]{Lemma}
\newtheorem{pro}[thm]{Proposition}
\theoremstyle{definition}
\theoremstyle{remark}
\newtheorem{rmk}[thm]{Remark}
\numberwithin{equation}{section}
\begin{document}

\setstretch{1.26}

\title{
\vspace{-35pt}
    \textsc{\huge{{R}ational {K}3 {H}omotopy and the {L}argest {M}athieu {G}roup}}
    }

\renewcommand{\thefootnote}{\fnsymbol{footnote}} 
\footnotetext{\emph{MSC2020:} 11F22, 11F50, 17B69, 20C34, 55Q52.}

\renewcommand{\thefootnote}{\arabic{footnote}} 

\author[2,3,5]{Federico Carta\thanks{federico.carta@bo.infn.it}}
\author[1]{John F.\ R.\ Duncan\thanks{jduncan@as.edu.tw}}
\author[3,4]{Yang-Hui He\thanks{hey@maths.ox.ac.uk}}

\affil[1]{Institute of Mathematics, Academia Sinica, Taipei, Taiwan.}
\affil[2]{Istituto Nazionale Di Fisica Nucleare, Sezione di Bologna, Bologna, Italy.}
\affil[3]{London Institute for Mathematical Sciences, The Royal Institution, London, UK.}
\affil[4]{Merton College, University of Oxford, Oxford, UK.}
\affil[5]{Physics Department, King’s College London, London, UK.}

\date{} 

\maketitle

\abstract{
We interpret the ranks of the rational homotopy groups of a K3 surface as dimensions of representations for the largest sporadic simple Mathieu group.
We then construct a vertex algebra equipped with an action by the largest Mathieu group, and use it to associate Jacobi forms to this interpretation, in a compatible way.
Our results suggest a topological role for the sporadic simple Mathieu groups in the theory of K3 surfaces.
}

\clearpage

\tableofcontents

\section{Rational Homotopy}\label{sec:rh}

In the field of algebraic topology, homotopy theory plays a central role. 
Let $X$ be a compact path-connected topological space, 
and for $i\geq 1$ let $S^i$ be the $i$-dimensional sphere.
Then the {\em $i$-th homotopy group} of $X$, denoted $\pi_i(X)$, is the set of homotopy classes of base-point preserving maps from $S^i$ to $X$ (where we choose base points on $S^i$ and $X$ to make this definition, but these choices do not effect the isomorphism type of $\pi_i(X)$).

The first of these, $\pi_1(X)$,  
is the {\em fundamental group} of $X$, 
and it may be abelian or non-abelian, depending on $X$. 
However, the higher homotopy groups, $\pi_i(X)$ for $i>1$, 
are always abelian,
and generally infinite, being a direct product of a free part (i.e.\ $\mathbb{Z}$ to some power) and a torsion part (i.e.\ a product of finite cyclic groups). In general, the problem of calculating the homotopy groups 
of a given 
space, 
especially the torsion part, is 
complicated, and even for such spaces as $S^i$ it is still open.

In this paper we focus on 
the more accessible {\it rational} homotopy groups
$\pi_i(X)\otimes \QQ$.
Tensoring with the field of rationals has the effect of annihilating the torsion component, 
so that just the rank of the free part remains. 
For this reason nothing is lost if, for $j>0$, we focus on the {\em rational homotopy ranks}
\begin{gather}
\varrho_j(X) := \dim \pi_{j+1}(X)\otimes \QQ.
\end{gather}

Whilst 
the computation of the homotopy groups of the spheres is an important open problem as mentioned, the computation of their rational homotopy groups 
is a celebrated result of Serre 
\cite{MR0045386,MR0059548}:
\begin{gather}\label{eqn:rh:rhs}
\varrho_j(S^{b+1}) =
\begin{cases}
	1 &\text{ if $j=b$,}\\
	1 &\text{ if $j=2b$ and $b$ is odd,}\\
	0 &\text{ else.}
\end{cases}
\end{gather}

The theory of rational homotopy was initiated by works of Quillen \cite{MR0258031} and Sullivan \cite{MR0646078}, who independently offered algebraic approaches to computing rational homotopy groups. (See \cite{MR1721122} for a historical review.) 
In Sullivan's approach, which is based on de Rham cohomology, 
a commutative differential algebra called the 
minimal model of $X$ is associated to each path-connected space $X$. 
(See \cite{MR2355774} for an introduction to Sullivan's minimal models and see \cite{MR3741934} for a survey.)

In some cases the minimal model of a space may be computed directly from 
its cohomology ring. 
Such spaces are called {\em formal}. 
For example, 
smooth complex projective varieties are formal according to \cite{MR0382702}. 
It is generally still a challenge to compute the rational homotopy groups of smooth projective varieties, 
because their cohomology rings are 
generally beyond our control. 
But 
Babenko \cite{MR0552549}
(see also 
\cite{MR0578259,MR1632936}) 
was able to carry out the computation concretely for complete intersections (with ambient space $\PP^N$, for some $N$).
To state the result define 
\begin{gather}\label{eqn:ellX}
\ell(X):=(-1)^{n}(\chi-n-1),
\end{gather}
where $\chi=\chi(X)$ is 
the Euler characteristic of $X$, and $n=\dim(X)$ is the dimension of $X$. 

\begin{thm}[\!\!\cite{MR0552549}]\label{thm:babenko}
Let $X$ be a smooth complete intersection and set $\ell=\ell(X)$, $n=\dim(X)$ and $\chi=\chi(X)$.
        If $\chi = n+1$ then
        \begin{gather}
        \label{eqn:varrhoell=0}
\varrho_j(X)
        = 
        \begin{cases}
        1&\text{ if $j=1$ or $j=2n$,}\\
        0&\text{ else,}
        \end{cases}
        \end{gather}
whereas if $\chi \neq n+1$ then 
        \begin{equation}\label{eqn:varrhoellnot0}
        \varrho_j(X)
        =
        \frac{(-1)^{j}}{j}
        \sum\limits_{k | j}
        (-1)^k \mu\left( \frac{j}{k} \right) \sum\limits_{i=1}^{2n-2} \xi_i^{-k}    ,
        \end{equation}
        where the first summation is over the divisors of $j$, and $\xi_{1},\ldots,\xi_{2n-2}$, together with $\xi_{2n-1} = -1$, are the roots of the polynomial
        \begin{gather}\label{eqn:rootsofpolynomial}
        1 -\ell  z^{n-1} - \ell  z^{n} + z^{2n-1} 
        .
        \end{gather}
\end{thm}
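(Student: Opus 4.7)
The plan is to reduce to an algebraic computation via formality, express the loop space Poincar\'e series in closed form, and then extract the rational homotopy ranks by Milnor--Moore and M\"obius inversion.

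Since smooth complex projective varieties are formal \cite{MR0382702}, the minimal model of $X$ depends only on the cohomology ring $H^*(X; \QQ)$. The Lefschetz hyperplane theorem gives this ring explicitly: it is generated by the hyperplane class $h$ of degree $2$ together with $\ell$ primitive classes $e_1, \ldots, e_{\ell}$ of degree $n$, subject to the relations $h^{n+1} = 0$, $h e_i = 0$, and $e_i e_j = q_{ij} h^n$ for a non-degenerate pairing $q$ in the middle dimension. In the exceptional case $\chi = n+1$ we have $\ell = 0$, so $H^*(X; \QQ) \cong \QQ[h]/(h^{n+1})$, and the Sullivan minimal model has precisely two generators: $x$ of degree $2$ and $y$ of degree $2n+1$, with $dy = x^{n+1}$. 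Reading $\varrho_j = \dim V^{j+1}$ immediately yields \eqref{eqn:varrhoell=0}.

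For $\chi \neq n+1$, I would compute the Poincar\'e series of the based loop space $\Omega X$. Formality identifies $H_*(\Omega X; \QQ)$ with $\operatorname{Tor}^{H^*(X; \QQ)}(\QQ, \QQ)$, and the explicit presentation of the cohomology ring produces a Tate resolution of $\QQ$ with explicit control. Euler-characteristic bookkeeping over the generators $h$, $e_i$ and the relations $h^{n+1}$, $h e_i$, $e_i e_j - q_{ij} h^n$ should yield
\begin{gather*}
P_{\Omega X}(z) = \frac{1+z}{1 - \ell z^{n-1} - \ell z^n + z^{2n-1}}.
\end{gather*}
The denominator polynomial has $-1$ as a root (matching $\xi_{2n-1} = -1$), and after cancelling the $(1+z)$ factor in the numerator one sees that $P_{\Omega X}(z)^{-1}$ has roots exactly $\xi_1, \ldots, \xi_{2n-2}$. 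This identification is the heart of the proof and will require care: the resolution must be organized so that the Euler characteristics of its layers collapse to the stated polynomial, with signs tracking the parity of $n$ (which determines whether the primitive generators $e_i$ sit in even or odd degree).

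Finally, Milnor--Moore identifies $H_*(\Omega X; \QQ)$ with the universal enveloping algebra of the graded Lie algebra $L_* = \pi_*(\Omega X) \otimes \QQ$, whose dimension in degree $j$ is $\varrho_j(X)$. The Poincar\'e--Birkhoff--Witt identity
\begin{gather*}
P_{\Omega X}(z) = \prod_{j \text{ odd}} (1+z^j)^{\varrho_j} \prod_{j \text{ even}} (1-z^j)^{-\varrho_j}
\end{gather*}
then translates, via logarithmic expansion, into an equation expressing the power sums $\sum_i \xi_i^{-k}$ as divisor sums $\sum_{j \mid k} j (-1)^{j(k+1)} \varrho_j$. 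M\"obius inversion on the divisor lattice of $j$ extracts \eqref{eqn:varrhoellnot0}.
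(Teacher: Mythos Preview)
Your proposal is correct and follows essentially the same route the paper sketches (following Babenko): compute the loop space Poincar\'e series $P_{\Omega X}(x)=(1+x)/(1-\ell x^{n-1}-\ell x^n+x^{2n-1})$ from the cohomology ring, then invert the product formula---your Milnor--Moore/PBW identity is exactly the paper's plethystic formula (\ref{eqn:rh:POmegaVple})---via logarithms and M\"obius inversion. The only cosmetic difference is that for $\ell=0$ you read off the ranks directly from the two-generator Sullivan model, whereas the paper passes through $P_{\Omega X}(x)=(1+x)/(1-x^{2n})$ and the same product identity; and where you outline the Tor/Tate-resolution computation of $P_{\Omega X}$, the paper simply cites it as Theorem~1 of \cite{MR0552549}.
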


\begin{rmk}
It is known 
    (see e.g.\ \cite{MR0137706})
    that the Euler number $\chi=\chi(X)$ of a smooth 
    complete intersection $X$ as in Theorem \ref{thm:babenko} is the coefficient of $x^n$ in the expansion about $x=0$ of
    \begin{gather}\label{eqn:chi}
    \frac{1}{(1-x)^2} \prod\limits_{i=1}^r
    \frac{a_i}{1 + (a_i - 1) x},
    \end{gather}
    where $n=\dim(X)$, and $a_1,\dots,a_r$ denote the degrees of the hypersurfaces that define $X$.
    In the special case that $r=1$, so that $X$ is a hypersurface of degree $d = a_1$, we obtain from this that
    \begin{gather}\label{eqn:chiHyper}
    \chi = \frac{1}{d} \left(
    (1-d)^{n+2} + d(n+2) - 1
    \right).        
    \end{gather}
\end{rmk}

\begin{rmk}
For a simple test of Theorem \ref{thm:babenko} consider a line $X$ in $\PP^2$, which is a copy of $\PP^1$ and topologically $S^2$.
Since $r=1$ we may apply (\ref{eqn:chiHyper}) with $d=n=1$ to obtain $\chi =2$. 
Thus $\chi=n+1$, and by
the first part of Theorem \ref{thm:babenko} we expect that the rational homotopy group $\pi_{i}(X)\otimes \QQ$ will be $\QQ$ for $i=2$ and $i=3$, and trivial otherwise. 
This agrees precisely with Serre's result (\ref{eqn:rh:rhs}).
\end{rmk}

Babenko's proof of (\ref{eqn:varrhoell=0}-\ref{eqn:varrhoellnot0}) is mostly concerned with the cohomology of the loop space $\Omega X$ of $X$. This is because we have
the plethystic formula
\begin{gather}\label{eqn:rh:POmegaVple}
	P_{\Omega X}(x)^{-1} = \prod_{j>0} (1-(-x)^j)^{(-1)^{j}\varrho_j(X)}
\end{gather}
(cf.\ \cite{MR2313959})
according to Lemma 2 of \cite{MR0578259}, where 
\begin{gather}\label{eqn:PX}
	P_X(x):=\sum_{i\geq 0}\dim H^i(X,\QQ)x^i
\end{gather}
is the {\em Poincar\'e series} of $X$. 

\begin{rmk}\label{rmk:plethysm-mult}
We pause here to emphasize that, since plethysm (\ref{eqn:rh:POmegaVple}) is multiplicative, 
$P_{\Omega X}$ and its reciprocal $P_{\Omega X}^{-1}$ are as good as each other when it comes to computing $\varrho_j(X)$ in the setup of \cite{MR0552549,MR0578259}.
\end{rmk}

The significance of the definition (\ref{eqn:ellX}) is that if $X$ is a smooth complete intersection of dimension $n$, then $\ell(X)$ is the number of generators in $H^n(X,\QQ)$ of the cohomology ring $H^*(X,\QQ)$ (see Proposition 2.1 of \cite{MR0552549}). 
Also, 
Theorem 1 of \cite{MR0552549} tell us that
\begin{gather}\label{eqn:POmegaVell0}
	P_{\Omega X}(x) = \frac{1+x}{1-x^{2n}}
\end{gather}
if 
$\ell(X)=0$, 
whereas if
$\ell(X)\neq 0$ then
\begin{gather}\label{eqn:POmegaVellnot0}
	P_{\Omega X}(x) = 
	\frac
	{1+x}
	{1-\ell x^{n-1}-\ell x^{n}+x^{2n-1}},
\end{gather}
where $\ell=\ell(X)$.
So, in light of (\ref{eqn:rh:POmegaVple}), the identities (\ref{eqn:varrhoell=0}) and (\ref{eqn:varrhoellnot0}) follow from (\ref{eqn:POmegaVell0}) and (\ref{eqn:POmegaVellnot0}), respectively, 
and
the $\xi_i$ that appear in (\ref{eqn:varrhoellnot0}) are alternatively characterised as the poles of the Poincar\'e series (\ref{eqn:POmegaVellnot0}) of the loop space of $X$ (when $\ell(X)\neq 0$).
\begin{rmk}
According to Proposition 2.1 of \cite{MR0552549} we have
\begin{gather}\label{eqn:rh:PVx}
	P_X(x) = 
	\frac
	{1+\ell x^n - \ell x^{n+2} - x^{2n+2}}
	{1-x^2}
\end{gather}
for the Poincar\'e series of a smooth complete intersection $X$ as in Theorem \ref{thm:babenko}, 
where $\ell=\ell(X)$ is as in (\ref{eqn:ellX}), and $n=\dim(X)$, and this formula (\ref{eqn:rh:PVx}) holds also when $\ell(X)=0$.
\end{rmk}

Now consider the case that $X$ is a smooth quartic in $\PP^3$, i.e.\ a K3 surface. 
Then $n=\dim(X)=2$, and according to (\ref{eqn:chiHyper}) we have $\chi(\KT)=\chi(X)=24$. 
Thus
 (\ref{eqn:ellX}) and (\ref{eqn:POmegaVellnot0})
yield
\begin{gather}\label{eqn:rh:POmegaK3}
	P_{\Omega \KT} (x )^{-1} =
	\frac
	{1-21x - 21x^2+x^3} 
	{1+x}
	=
	{1-22x+x^2}
	.
\end{gather}
Now, using (\ref{eqn:varrhoellnot0}) or the plethystic expansion (\ref{eqn:rh:POmegaVple}), 
which in this case reads as
\begin{gather}\label{eqn:rh:POmegaVple-2}
\begin{split}
	P_{\Omega\KT}(x)^{-1} 
	= 
	{1-22x+x^2}
	&= \prod_{j>0} (1-(-x)^j)^{(-1)^{j}\varrho_j(\KT)}
	\\
	&=\prod_{j>0}
	\frac
	{(1-x^{2j})^{\varrho_{2j}(\KT)}}
	{(1+x^{2j-1})^{\varrho_{2j-1}(\KT)}}
	,
\end{split}
\end{gather}
we may compute
\begin{gather}\label{eqn:dimpiiK3}
 	\sum_{j>0}
    \varrho_j(\KT)x^j = 
    22x
    + 252x^2
    + 3520x^3
    + 57960x^4 
    + 1020096x^5 
    + \dots.
\end{gather}

Comparing (\ref{eqn:dimpiiK3}) with the character table of the sporadic simple group $M_{24}$ (which we reproduce in Table \ref{tab:ct}) we note that $\rjg_2(\KT)=252$ and $\rjg_3(\KT)=3520$ are dimensions of irreducible representations of $M_{24}$, 
while $\rjg_1(\KT)=22$ is just $1$ less than the minimal dimension of a non-trivial representation of 
$M_{24}$.
In the next section we will explain how to 
interpret all the K3 rational homotopy ranks 
$\varrho_j(\KT)$ for $j>1$ as dimensions of representations of $M_{24}$.
		
\section{The Largest Mathieu Group}\label{sec:m24}

The {\em largest Mathieu group}, $M_{24}$, is the unique $5$-transitive group of permutations on $24$ points that is not the full symmetric group $S_{24}$, or its subgroup $A_{24}$.
(See \cite{MR1409812}.)
It was discovered by \'Emile Mathieu (see \cite{Mat_1861,Mat_1873}) and its order is
\begin{gather}\label{eqn:M24}
	\# M_{24} = 2^{10}\cdot 3^3\cdot 5\cdot 7\cdot 11\cdot 23 = 244823040.
\end{gather}
It is one of the sporadic simple groups.
The stabilizer of a point in $M_{24}$ is denoted $M_{23}$, is also a sporadic simple group, and satisfies
\begin{gather}\label{eqn:M23}
	\# M_{23} = 2^{7}\cdot 3^2\cdot 5\cdot 7\cdot 11\cdot 23 = 10200960.
\end{gather}

For $g\in M_{24}$ set $\rjg_1'(g):=\chi_2(g)-\chi_1(g)$, where $\chi_j$, for $1\leq j\leq 26$, denotes an irreducible character of $M_{24}$ as specified in Table \ref{tab:ct}. 
Next define
\begin{gather}\label{eqn:rh:Pg}
	P_g(x):=\frac{1}{1-\rjg_1'(g)x+x^2},
\end{gather}
so that $P_e=P_{\Omega\KT}$ (cf.\ (\ref{eqn:rh:POmegaK3})) for $e$ the identity element of $M_{24}$.
Then, by 
taking the logarithm of (\ref{eqn:rh:POmegaVple-2}) we obtain
\begin{gather}\label{eqn:rh:logPe}
\begin{split}
	\log P_e(x)^{-1} 
	&
	=  
	\sum_{j>0} (-1)^{j}\varrho_j(\KT) \log (1-(-x)^j)
	\\
	&
	=  
	-\sum_{j,k>0} (-1)^{j(k+1)}{\varrho_j(\KT)}\frac1{k} {x^{jk}}.
\end{split}
\end{gather}
We now define $\rjg_j(g)$, for $j>0$ and $g\in M_{24}$, by requiring that
\begin{gather}\label{eqn:rh:def-rhojg}
	\log P_g(x)^{-1}
	=  
	-\sum_{j,k>0} (-1)^{j(k+1)}{\rjg_j(g^k)}\frac1k {x^{jk}}.
\end{gather}

The idea here (\ref{eqn:rh:def-rhojg}) is that $\rjg_j(g)$ should be the trace of $g\in M_{24}$ on an $M_{24}$-module $R_j$ with 
\begin{gather}\label{eqn:dimRj}
	\dim R_j = \rjg_j(\KT).
\end{gather}
But it is perhaps not clear even that the values $\rjg_j(g)$ are all well-defined. 
In fact we have the following theorem, which is our first main result, and which interprets all the rational K3 homotopy ranks $\varrho_j(\KT)$ for $j>1$ as dimensions of representations of $M_{24}$. 
\begin{thm}\label{thm:rh:rhoj}
We have $\rjg_1=\rjg'_1$ and $\rjg_j(e)=\varrho_j(\KT)$, and $\rjg_j$ is the character of a representation $R_j$ of  $M_{24}$ for $j>1$.
In particular, $\rjg_j(g)$ is well-defined for $j>0$ and $g\in M_{24}$.
\end{thm}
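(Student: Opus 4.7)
My plan is to invert the defining identity (\ref{eqn:rh:def-rhojg}) term by term to produce explicit formulas for $\rjg_j(g)$, check the two identities $\rjg_1=\rjg_1'$ and $\rjg_j(e)=\varrho_j(\KT)$ by direct substitution, and then establish the character property via an equivariant graded module whose existence is underwritten by the vertex-algebra construction developed later in the paper.

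First I would factor $P_g(x)^{-1}=(1-\alpha x)(1-\beta x)$ with $\alpha+\beta=\rjg_1'(g)$ and $\alpha\beta=1$, and set $T_n(g):=\alpha^n+\beta^n$. The Chebyshev-type recursion $T_n=\rjg_1'(g)T_{n-1}-T_{n-2}$ with $T_0=2$ and $T_1=\rjg_1'(g)$ shows each $T_n$ is an integer polynomial in $\rjg_1'(g)$. Taking logarithms gives $\log P_g(x)^{-1}=-\sum_{n\ge 1}\tfrac{T_n(g)}{n}x^n$, and comparing with the right-hand side of (\ref{eqn:rh:def-rhojg}), using the identity $(-1)^{j(k+1)}=(-1)^{n+j}$ when $n=jk$, yields
\begin{equation*}
T_n(g)\;=\;\sum_{j\mid n}(-1)^{n+j}\,j\,\rjg_j(g^{n/j}).
\end{equation*}
Isolating the $j=n$ term produces a recursion that defines $\rjg_n(g)$ inductively from $T_n(g)$ and lower-order data, establishing well-definedness; Möbius inversion then yields the closed form
\begin{equation*}
\rjg_n(g)\;=\;\frac{1}{n}\sum_{d\mid n}(-1)^{n+d}\mu(n/d)\,T_d(g^{n/d}).
\end{equation*}
The $n=1$ case of the recursion immediately gives $\rjg_1(g)=T_1(g)=\rjg_1'(g)$, and specializing to $g=e$ reduces (\ref{eqn:rh:def-rhojg}) to (\ref{eqn:rh:logPe}), so $\rjg_n(e)=\varrho_n(\KT)$ for all $n$.

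The main obstacle is showing that $\rjg_j$ is the character of a \emph{genuine} (not merely virtual) $M_{24}$-representation $R_j$ for $j\ge 2$. Since $\rjg_1'=\chi_2-\chi_1$ is a virtual character of $M_{24}$ and $T_d$ is an integer polynomial in $\rjg_1'$, each term $T_d(g^{n/d})$ in the Möbius sum is a virtual character (via stability of virtual characters under Adams operations), so $n\rjg_n$ is certainly a virtual character; integrality of $\rjg_n$ on each conjugacy class then follows either from a finite check against the power-map table of $M_{24}$ or from a Witt-type divisibility argument. The hard part is non-negativity of the inner products $\langle\rjg_n,\chi\rangle$ against each irreducible $\chi$, which is not visible directly from the Möbius formula. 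I would establish this uniformly by constructing an $M_{24}$-equivariant graded super vector space $\mathcal{L}=\bigoplus_{j\ge 1}R_j$, with $R_j$ purely bosonic for even $j$ and purely fermionic for odd $j$, whose super-symmetric algebra has equivariant Poincaré series $P_g(x)$; indeed the formal identity $P_g(x)^{-1}=\prod_j\det(1-g(-x)^j\mid R_j)^{(-1)^j}$ is equivalent to (\ref{eqn:rh:def-rhojg}) after taking logarithms. The vertex-algebra construction promised in the abstract and developed in later sections should furnish such an $\mathcal L$, thereby producing the representations $R_j$ with characters $\rjg_j$ and completing the proof.
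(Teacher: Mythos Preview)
Your inversion of (\ref{eqn:rh:def-rhojg}) is correct, and the closed M\"obius formula
\[
\rjg_n(g)=\frac{1}{n}\sum_{d\mid n}(-1)^{n+d}\mu(n/d)\,T_d(g^{n/d})
\]
is valid in general (it is essentially the identity (\ref{eqn:rhomg-gen}) that the paper records as a remark). The identities $\rjg_1=\rjg_1'$ and $\rjg_j(e)=\varrho_j(\KT)$ follow as you say. That each $\rjg_j$ is at least a \emph{virtual} character also goes through along your lines, though the paper handles this more cleanly by working directly in the representation ring $R[[x]]$ and invoking Lemma~\ref{lem:repthyple}, which sidesteps the separate divisibility-by-$n$ argument you flag.

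The genuine gap is in the non-negativity step. The vertex-algebra construction of \S\ref{sec:va} does \emph{not} furnish a graded $M_{24}$-module $\mathcal{L}=\bigoplus_{j\ge 1}R_j$ whose (super-)symmetric algebra has equivariant Poincar\'e series $P_g(x)$; nothing in the paper claims this. What \S\ref{sec:va} produces is a bigraded super module $M^{s\natural}_\tw$ whose bigraded \emph{super-traces} (note the $(-1)^F$ in (\ref{eqn:Rg})) are the Jacobi forms $\R_g$. The $q$-constant term of $\R_g$ is the polynomial $y^{-2}+\rjg_1'(g)+y^2$, and already here $\rjg_1'=\chi_2-\chi_1$ arises as a genuine super-trace that is not the character of an honest module; there is no mechanism in the paper for extracting honest representations $R_j$ from $M^{s\natural}_\tw$, nor any plethystic factorisation of the module itself. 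The paper instead establishes non-negativity of the multiplicities $\rjg_j(\chi)$ by a finite computer verification for $2\le j<20$ (the tables of \S\ref{sec:mlt}) combined with analytic estimates for $j\ge 16$: Lemma~\ref{lem:rh:rhom1A-lb} bounds $\rjg_j(e)$ from below, Lemma~\ref{lem:rh:rhomg-ub} bounds $|\rjg_j(g)|$ for $g\ne e$ from above, and the triangle inequality applied to (\ref{eqn:rhomchi}) then forces $\rjg_j(\chi)\ge 0$. Your proposal skips this entire argument and replaces it with an appeal to a construction that does not exist in the paper; without either that construction or the bounding argument, the theorem is not proved.
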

\begin{proof}
We obtain the identity $\rjg_1(g)=\rjg_1'(g)$ by 
recalling the definition (\ref{eqn:rh:Pg}) of $P_g$ and 
reducing (\ref{eqn:rh:def-rhojg}) modulo $x^2$. 
We obtain $\rjg_j(e)=\varrho_j(\KT)$ by taking $g=e$ in (\ref{eqn:rh:def-rhojg}) and comparing with (\ref{eqn:rh:logPe}).
Thus $\rjg_j(g)$ is well-defined, and integer-valued, at least when $j=1$ or $g=e$.

To see that $\rjg_j(g)$ is well-defined and integer-valued in general 
we apply Lemma \ref{lem:repthyple} to the reciprocal of $f = 1- U_1x+x^2$, where 
$U_1$ represents a (virtual) module with character $\rjg_1=\rjg_1'$. 

For $\chi$ an irreducible character of $M_{24}$ let $\rjg_j(\chi)$ denote the multiplicity of $\chi$ in the character $g\mapsto \rjg_j(g)$. 
It remains to show that the $\rjg_j(\chi)$ are non-negative for $j>1$. 
We computed directly that $\rjg_j(\chi)\geq 0$ for all irreducible $\chi$, for $1<j < 20$, using Mathematica \cite{Mathematica}. See the tables in \S~\ref{sec:mlt} for the computed values.
In the remainder we will explain how to show that the $\rjg_j(\chi)$ are non-negative for $j\geq 16$.

For $g\in M_{24}$ let $C(g)$ denote the centralizer of $g$ in $M_{24}$. 
According to the character theory of finite groups we have 
\begin{gather}\label{eqn:rhomchi}
	\rjg_j(\chi) = \sum_{[g]}\frac{\rjg_j(g)}{\# C(g)} \overline{\chi(g)},
\end{gather}
where the sum is over the conjugacy classes of $M_{24}$.
Since $\rjg_j(e)> |\rjg_j(g)|$ for $g\neq e$ in the case that $\rjg_j$ is a character, we expect the main term in (\ref{eqn:rhomchi}) to be the one corresponding to $[g]=[e]$. 
Applying the triangle inequality to (\ref{eqn:rhomchi}) we obtain
\begin{gather}\label{eqn:rhomchi_ineq}
	\rjg_j(\chi) \geq \frac{\rjg_j(e)}{\# M_{24}} \dim(\chi) - \sum_{[g]\neq [e]}\frac{|\rjg_j(g)|}{\# C(g)} |{\chi(g)}|,
\end{gather}
so it suffices to show that
\begin{gather}\label{eqn:rhomchi_ineq2}
	\frac{\rjg_j(e)}{\# M_{24}} \dim(\chi) \geq \sum_{[g]\neq [e]}\frac{|\rjg_j(g)|}{\# C(g)} |{\chi(g)}|,
\end{gather}
for all irreducible $\chi$, for $j\geq 16$. Since $\dim(\chi)\geq |\chi(g)|$ for all $g\in M_{24}$, and $\#M_{24}=\#C(g) \#[g]$, 
we may replace (\ref{eqn:rhomchi_ineq2}) with the more crude inequality
\begin{gather}\label{eqn:rhom_ineq3}
	{\rjg_j(e)} \geq \sum_{[g]\neq [e]}\#[g]{|\rjg_j(g)|}.
\end{gather}

Suppose now that $A(x)$ and $B(x)$ are functions of a real variable $x$ with the property that, for $j\geq 16$, we have
$\rjg_j(e)>A(j)$ and $B(j)\geq |\rjg_j(g)|$ for all $g\neq e$. Then for $j\geq 16$ the right-hand side of (\ref{eqn:rhom_ineq3}) is bounded above by $\#M_{24}B(j)$, and the required identity will follow so long as 
\begin{gather}\label{eqn:ABineq}
A(x)>\#M_{24}B(x)
\end{gather} 
for $x\geq 16$. 
Now define
\begin{gather}
\begin{split}
	A(x) &:= 
	\frac1x 22^x\left(\frac{481^x}{482^x}+\frac{1}{483^x}\right) - 22^{\frac x 2},\\
	B(x) &:=
	\frac1x 6^x
	+ \frac1x 22^{\frac{x}2} 
	+6^{\frac{x}3} + 22^{\frac{x}{6}}.
\end{split}
\end{gather}
Then Lemma \ref{lem:rh:rhom1A-lb} shows that $\rjg_j(e)>A(j)$ for $j\geq 16$, and Lemma \ref{lem:rh:rhomg-ub} shows that $B(j)\geq |\rjg_j(g)|$ for $g\neq e$ and $j\geq 16$.
The inequality (\ref{eqn:ABineq}) follows, for $x\geq 16$, from a direct check.
This completes the proof.
\end{proof}
 
\section{Vertex Algebra and Jacobi Forms}\label{sec:va} 
 
For $\tau\in\CC$ with $\Im(\tau)>0$ the {\em Dedekind eta function} is 
$\eta(\tau):=q^{\frac1{24}}\prod_{n>0}(1-q^n)$,
where $q=e^{2\pi i \tau}$,
and the {\em Jacobi theta functions} are 
\begin{gather}	
\label{eqn:sf:jac}
\theta_1(\tau,z)
	:= -i q^{\frac18} y^{\frac12} \prod_{n>0}  (1-y q^n) (1-y^{-1} q^{n-1})(1-q^n),\\
	\theta_2(\tau,z)
	:=  q^{\frac18} y^{\frac12} \prod_{n>0}  (1+y q^n) (1+y^{-1} q^{n-1})(1-q^n),\\
	\theta_3(\tau,z)
	:=  \prod_{n>0} (1+y \,q^{n-\frac12}) (1+y^{-1} q^{n-\frac12}) (1-q^n),\\
	\theta_4(\tau,z) 
	:=  \prod_{n>0}  (1-y \,q^{n-\frac12}) (1-y^{-1} q^{n-\frac12})(1-q^n),
\end{gather}
where $q=e^{2\pi i \tau}$ and $y=e^{2\pi i z}$.
From these concrete specifications it follows 
that the function
\begin{gather}\label{eqn:va:phi}
	\R(\tau,z)
	:=
	-\frac12\frac{\theta_4(\tau,2z)}{\theta_4(\tau,0)}\frac{\eta(\frac{\tau}{2})^{24}}{\eta(\tau)^{24}}
	+\frac12\frac{\theta_3(\tau,2z)}{\theta_3(\tau,0)}\frac{\eta({\tau})^{48}}{\eta(\frac{\tau}{2})^{24}\eta(2\tau)^{24}}
	-\frac12\frac{\theta_2(\tau,2z)}{\theta_2(\tau,0)}2^{12}\frac{\eta({2\tau})^{24}}{\eta(\tau)^{24}}
\end{gather}
is a weak Jacobi form of weight $0$ and index $2$.
(See \cite{MR781735} for background on Jacobi forms.) 
Moreover, by explicit calculation 
we may compute that
\begin{gather}\label{eqn:va:limImtauphi}
	\R(\tau,z) = y^{-2}+22+y^{2} +O(q) 
\end{gather}
as $\Im(\tau)\to 0$. 
Thus
we recover $P_{\Omega\KT}^{-1}$ (recall (\ref{eqn:rh:POmegaK3}-\ref{eqn:rh:POmegaVple-2})) from $\R$ via the formula
\begin{gather}\label{eqn:va:phitoPOmegaK3}
	\lim_{\Im(\tau)\to \infty} \R(\tau,z)
	=
	y^{-2}P_{\Omega\KT}(-y^2)^{-1}
	.
\end{gather}
In other words, 
we may regard $\R$ as a modular extension of $P_{\Omega\KT}^{-1}$.

We have associated a twining $P_g$ (see (\ref{eqn:rh:Pg})) of $P_{\Omega\KT}$ to each element $g$ of the Mathieu group $M_{24}$.
In this section we will use a vertex-algebraic construction to attach a weak Jacobi form $\R_g$ to each $g\in M_{24}$ 
in such a way that
\begin{enumerate}
\item
We recover the 
$P_g^{-1}$ (recall (\ref{eqn:rh:logPe}-\ref{eqn:rh:def-rhojg}))
from the $\R_g$ 
via the formula
\begin{gather}\label{eqn:va:phigtoPg}
	\lim_{\Im(\tau)\to \infty} \R_g(\tau,z)
	=
	y^{-2}P_{g}(-y^2)^{-1},
\end{gather}
and
\item
The $\R_g$ are the graded traces defined by 
a bigraded infinite-dimensional module for $M_{24}$.
\end{enumerate}

Before proceeding we mention that the function $\R(\tau,z)$ of (\ref{eqn:va:phi}) appeared earlier in connection with Mathieu groups in \cite{MR3373711}. 
In that work a vertex algebra with an action of $M_{23}$ (cf.\ (\ref{eqn:M23})) is constructed, via a method very similar to that of \cite{MR3465528}, and used to associate a Jacobi form $Z_g^{\mathcal{N}=2}$ of weight $0$ and index $2$ (with level) to each $g\in M_{23}$. 
In fact our construction agrees with theirs in that we have $\R_g=Z_g^{\mathcal{N}=2}$ for each $g\in M_{23}$.
However, whilst the construction of \cite{MR3373711} does not extend to $M_{24}$, 
the technique we use here (which is similar to that used in \cite{MR3922534}) allows us to define $\R_g$ for all $g\in M_{24}$.

To prepare for what follows we agree to use the term {\em symplectic (vector) space} to refer to a vector space equipped with a non-degenerate alternating bilinear form, and we use the term {\em orthogonal (vector) space} to mean a vector space equipped with a non-degenerate symmetric bilinear form. 
 
There are standard constructions which attach a super vertex algebra, and a canonically twisted module for it, to 
an orthogonal or symplectic vector space.
This is reviewed e.g.\ in Section 2 of \cite{MR3859972}, and we adopt the notation of that reference here.
In particular, 
we write $A(\gt{a})$ and 
$A(\gt{a})_\tw$ (respectively) for 
the super vertex algebra and 
canonically twisted $A(\gt{a})$-module attached by the construction of Section 2.1 of \cite{MR3859972} to a 
complex orthogonal space $\gt{a}$, 
and write $\uA(\gt{b})$ and
$\uA(\gt{b})_\tw$ (respectively) for  
the super vertex algebra and 
canonically twisted $\uA(\gt{b})$-module 
attached by the construction of Section 2.2 of \cite{MR3859972} to a 
complex symplectic space $\gt{b}$. 

Henceforth let $\gt{a}$ denote a $2$-dimensional orthogonal space, 
let $\gt{b}$ be a $2$-dimensional symplectic space, 
let $\gt{v}$ be a $24$-dimensional orthogonal space,
and define
\begin{gather}\label{eqn:va:W}
	M:=A(\gt{a})\otimes \uA(\gt{b})\otimes A(\gt{v}),\quad
	M_\tw:=A(\gt{a})_\tw\otimes \uA(\gt{b})_\tw\otimes A(\gt{v})_\tw.
\end{gather}
Then $M$ is naturally a super vertex algebra, and $M_\tw$ is naturally a canonically twisted module for $M$.

We wish to equip $M$ and $M_\tw$ with compatible actions of the Mathieu group $M_{24}$. 
For this we recall that (according to the conventions of \cite{MR3859972}) the {\em Clifford algebra} attached to $\gt{v}$ is the quotient
\begin{gather}\label{eqn:Cliffgtv}
	\Cliff(\gt{v}):=T(\gt{v})/\lab v\otimes v'+v'\otimes v - \lab v,v'\rab {\bf 1}\mid v,v'\in\gt{v}\rab
\end{gather}
of the tensor algebra $T(\gt{v})$ of $\gt{v}$ by the ideal generated by expressions of the form 
$v\otimes v'+v'\otimes v - \lab v,v'\rab {\bf 1}$ for $v,v'\in\gt{v}$.
This is useful because it allows us to concretely identify the {\em Spin group} attached to $\gt{v}$ with the set
\begin{gather}
	\Spin(\gt{v}) := 
	\{
	x\in \Cliff(\gt{v})^* \mid \alpha(x)x={\bf 1}
	 \}
\end{gather}
of invertible elements of $\Cliff(\gt{v})$ such that $\alpha(x)$ is the inverse of $x$, where $\alpha$ is the unique anti-automorphism of $\Cliff(\gt{v})$ that satisfies $\alpha(v_1\dots v_k)=v_k\dots v_1$ for $v_j\in \gt{v}$.
Alternatively, $\Spin(\gt{v})$ is generated by the exponentials of the expressions of the form $vv'-v'v$ for $v,v'\in\gt{v}$.
For example, if $v^\pm\in\gt{v}$ are isotropic vectors such that $\lab v^-,v^+\rab=1$, then 
\begin{gather}\label{eqn:va:X}
X=i(v^+v^- -v^-v^+)
\end{gather}
satisfies $X^2=-{\bf 1}$ in $\Cliff(\gt{v})$, 
so $e^{a X}=\cos(a){\bf 1}+\sin(a)X$ in $\Cliff(\gt{v})$ for $\nu\in \CC$, and the exponential $e^{a X}$ belongs to $\Spin(\gt{v})$.

For $x\in \Spin(\gt{v})$ and $v\in \gt{v}$ define
\begin{gather}\label{eqn:x(v)}
	x(v):=xvx^{-1},
\end{gather}
where the product on the right-hand side of (\ref{eqn:x(v)}) is evaluated in $\Cliff(\gt{v})$. 
Then $x(v)$ belongs to the canonical copy of $\gt{v}$ inside $\Cliff(\gt{v})$, and the assignment $x\mapsto x(\cdot )$ defines a surjective map $\Spin(\gt{v})\to \SO(\gt{v})$ with exactly $\{\pm{\bf 1}\}$ as its kernel.

The group $\Spin(\gt{v})$ acts naturally on $A(\gt{v})$ and $A(\gt{v})_\tw$ in such a way that $xY_\tw(v,z)v' = Y_{\tw}(xv,z)xv'$ for $x\in \Spin(\gt{v})$ and $v\in A(\gt{v})$ and $v'\in A(\gt{v})_\tw$. 
Indeed, the natural action of $\Spin(\gt{v})$ on $A(\gt{v})$ is characterized by the requirement that 
\begin{gather}\label{eqn:SpingtvonAgtv}
	xv_1(-n_1+\tfrac12)\dots v_k(-n_k+\tfrac12){\bf v}
	=x(v_1)(-n_1+\tfrac12)\dots x(v_k)(-n_k+\tfrac12){\bf v}
\end{gather}
(cf.\ (\ref{eqn:x(v)})) for $v_j\in \gt{v}$ and $n_j\leq 0$. Note that this action (\ref{eqn:SpingtvonAgtv}) factors through to the special orthogonal group $\SO(\gt{v})$ of $\gt{v}$.
To define the action of $\Spin(\gt{v})$ on $A(\gt{v})_\tw$ we 
recall (see e.g.\ \S~2.1 of \cite{MR3859972}) that there is a natural identification 
\begin{gather}\label{eqn:Agtvtw_simeq_wedgeCMgtv}
A(\gt{v})_\tw
\simeq 
\bigwedge(v(-n)\mid v\in\gt{v},\;n> 0)\otimes \CM(\gt{v}),
\end{gather}
where $\CM(\gt{v})$ denotes (a realization of, cf.\ (\ref{eqn:CMgtv=Cliffgtvetc})) the unique irreducible module for $\Cliff(\gt{v})$.
The action of $\Spin(\gt{v})$ on $A(\gt{v})_\tw$ is then given by
\begin{gather}\label{eqn:SpingtvonAgtvtw}
	xv_1(-n_1)\dots v_k(-n_k)\otimes y
	=x(v_1)(-n_1)\dots x(v_k)(-n_k)\otimes xy,
\end{gather}
for $v_j\in \gt{v}$ and $n_j< 0$ and $y\in \CM(\gt{v})$.

Our next step is to realize $M_{24}$ as a subgroup of $\SO(\gt{v})$ by choosing an orthonormal basis for $\gt{v}$ and letting $M_{24}$ act as permutations on these basis vectors. 
Let us suppose we have done this, and write $G$ for the subgroup of $\SO(\gt{v})$ so constructed.
Then the argument of Proposition 3.1 of \cite{MR3465528} (with $M_{24}$ in place $\Co_0$) of shows that there is a subgroup of $\Spin(\gt{v)}$ that is both isomorphic to $G$ and contained in the preimage of $G$ under the natural map $\Spin(\gt{v})\to \SO(\gt{v})$. 
We now have a copy of $M_{24}$ in $\Spin(\gt{v})$, and therefore also actions of $M_{24}$ on $M$ and $M_\tw$.
 
The representation of $M_{24}$ that 
we use to define the $\R_g$ is constructed from $M$ and $M_\tw$. 
To proceed we
let $\vv$ denote the vacuum of $M$, let $\gt{z}_{\gt{a}}$ denote the canonical involution on $A(\gt{a})$, and interpret the symbols $\gt{z}_\gt{b}$ and $\gt{z}_\gt{v}$ similarly. 
Once and for all we choose decompositions $\gt{a}=\gt{a}^-\oplus \gt{a}^+$ and $\gt{b}=\gt{b}^-\oplus \gt{b}^+$ and $\gt{v}=\gt{v}^-\oplus \gt{v}^+$, of $\gt{a}$ and $\gt{b}$ and $\gt{v}$ into maximal isotropic subspaces. 
Then there
is a unique (up to scale) vector $\vv_{\tw,\gt{v}}\in A(\gt{v})_\tw$ with the property that
\begin{gather}\label{eqn:va:anvvtwgta}
	v(n)\vv_{\tw,\gt{v}}=0
\end{gather}
when either $n<0$, or $n=0$ and $v\in \gt{v}^+$. 
Indeed, 
we may realize $\CM(\gt{v})$ explicitly as 
\begin{gather}\label{eqn:CMgtv=Cliffgtvetc}
	\CM(\gt{v}) = \Cliff(\gt{v})\otimes_{\lab \gt{v}^+\rab}\CC\vv_{\tw,\gt{v}},
\end{gather}
where $\lab\gt{v}^+\rab$ denotes the subalgebra of $\Cliff(\gt{v})$ generated by $\gt{v}^+$, and $\CC\vv_{\tw,\gt{v}}$ denotes the $1$-dimensional $\lab\gt{v}^+\rab$-module characterized by the conditions that ${\bf 1}\vv_{\tw,\gt{v}}=\vv_{\tw,\gt{v}}$ and $\gt{v}^+\vv_{\tw,\gt{v}}\subset \{0\}$.
Then the $\vv_{\tw,\gt{v}}$ in (\ref{eqn:va:anvvtwgta}) is ${\bf 1}\otimes \vv_{\tw,\gt{v}}$ in (\ref{eqn:CMgtv=Cliffgtvetc}).
We let $\vv_{\tw,\gt{a}}$ denote the vector in $A(\gt{a})_\tw$ that is characterized (up to scale) in the directly analogous way.

There is also a counterpart $\vv_{\tw,\gt{b}}\in \;\uA(\gt{b})_\tw$ to $\vv_{\tw,\gt{a}}$ and $\vv_{\tw,\gt{v}}$, but to identify it we need 
the irreducible module 
\begin{gather}\label{eqn:WMgtb=Weylgtbetc}
	\WM(\gt{b}) = \Weyl(\gt{b})\otimes_{\lab \gt{b}^+\rab}\CC\vv_{\tw,\gt{b}}
\end{gather}
(cf.\ (\ref{eqn:CMgtv=Cliffgtvetc}))
for the Weyl algebra
\begin{gather}\label{eqn:Weylgtb}
	\Weyl(\gt{b}):=T(\gt{b})/\lab b\otimes b'-b'\otimes b - \lab b,b'\rab {\bf 1}\mid b,b'\in\gt{b}\rab
\end{gather}
(cf. (\ref{eqn:Cliffgtv})).
In (\ref{eqn:WMgtb=Weylgtbetc}) we use
$\lab\gt{b}^+\rab$ to denote the subalgebra of $\Weyl(\gt{b})$ generated by $\gt{b}^+$, and $\CC\vv_{\tw,\gt{b}}$ is the $1$-dimensional $\lab\gt{b}^+\rab$-module characterized by the conditions that ${\bf 1}\vv_{\tw,\gt{b}}=\vv_{\tw,\gt{b}}$ and $\gt{b}^+\vv_{\tw,\gt{b}}\subset \{0\}$.
With these definitions we have
\begin{gather}
\uA(\gt{b})_\tw
\simeq 
\bigvee(b(-n)\mid b\in\gt{b},\;n> 0)\otimes \WM(\gt{b}),
\end{gather}
and we use
$\vv_{\tw,\gt{b}}$ as a shorthand for ${\bf 1}\otimes \vv_{\tw,\gt{b}}$.

We extend the action of $\gt{z}_\gt{a}$ to $A(\gt{a})_\tw$ by requiring that $\gt{z}_\gt{a}\vv_{\tw,\gt{a}}=i\vv_{\tw,\gt{a}}$ and also that
\begin{gather}
	Y_\tw(\gt{z}_\gt{a}v,z)\gt{z}_\gt{a}v' = \gt{z}_\gt{a} Y_\tw(v,z)v'
\end{gather}
for $v\in A(\gt{a})$ and $v'\in A(\gt{a})_\tw$. 
We extend the action of $\gt{z}_\gt{b}$ to $\uA(\gt{b})_\tw$ by requiring that $\gt{z}_\gt{b}\vv_{\tw,\gt{b}}=-i\vv_{\tw,\gt{b}}$ and also that
\begin{gather}
	Y_\tw(\gt{z}_\gt{b}v,z)\gt{z}_\gt{b}v' = \gt{z}_\gt{b} Y_\tw(v,z)v'
\end{gather}
for $v\in\; \uA(\gt{b})$ and $v'\in\; \uA(\gt{b})_\tw$. 
For the action of $\gt{z}_\gt{v}$ on $A(\gt{v})_\tw$ we require simply that $\gt{z}_\gt{v}\vv_{\tw,\gt{v}}=\vv_{\tw,\gt{v}}$, and also
\begin{gather}
	Y_\tw(\gt{z}_\gt{v}v,z)\gt{z}_\gt{v}v' = \gt{z}_\gt{v} Y_\tw(v,z)v'
\end{gather}
for $v\in A(\gt{v})$ and $v'\in A(\gt{v})_\tw$.

Set $\vv_\tw:=\vv_{\tw,\gt{a}}\otimes  \vv_{\tw,\gt{b}}\otimes \vv_{\tw,\gt{v}}$ and set $\gt{z}:=\gt{z}_\gt{a}\otimes \gt{z}_\gt{b}\otimes \gt{z}_\gt{v}$, and write $M^j$ and $M_\tw^j$ for the $(-1)^j$ eigenspaces 
for the action of $\gt{z}$ on $M$ and $M_\tw$ (respectively). 
Then, as explained in \cite{Dun_VACo,MR3376736}, the method of \cite{MR1372720} defines a super vertex algebra structure on
\begin{gather}\label{eqn:va:Msnatural}
	M^{s\natural}:=M^0\oplus M_\tw^1,
\end{gather}
and naturally equips the space 
\begin{gather}\label{eqn:va:Msnaturaltw}
	M^{s\natural}_\tw:=M^1\oplus M_\tw^0
\end{gather}
with the structure of a canonically twisted module for $M^{s\natural}$.
Also, the actions of $M_{24}$ on $M$ and $M_{\tw}$ commute with $\gt{z}$, and so $M_{24}$ acts naturally on $M^{s\natural}$ and $M^{s\natural}_\tw$.

We are ready to define the $\R_g$. For this  
we 
choose, for each $g\in M_{24}$, a basis $\{v_{g,j}^\pm\}_{j=1}^{12}$ for 
$\gt{v}$ such that each $v_{g,j}^\pm$ is an isotropic eigenvector for $g$, and 
\begin{gather}\label{eqn:vgjmpvgkpm}
	\lab v_{g,j}^\mp,v_{g,k}^\pm\rab=\delta_{jk}.
\end{gather}
Note that, after swapping e.g.\ $v_{g,1}^+$ with $v_{g,1}^-$ if necessary, we may assume 
that the unique (up to scale) vector $\vv_{\tw,\gt{v},g}$ in 
$\CM(\gt{v})$ (cf.\ (\ref{eqn:Agtvtw_simeq_wedgeCMgtv}) and (\ref{eqn:va:anvvtwgta}-\ref{eqn:CMgtv=Cliffgtvetc})) such that
\begin{gather}
v_{g,j}^+(0)\vv_{\tw,\gt{v},g}=0
\end{gather} 
for all $j$, is fixed by $\gt{z}_\gt{v}$. This ensures that 
$\vv_{\tw,\gt{a}}\otimes\vv_{\tw,\gt{b}}\otimes \vv_{\tw,\gt{v},g}$  belongs to $\msn_\tw$ (cf.\ \ref{eqn:va:Msnaturaltw}).

Next let $K_g(0)$ denote the operator on $\msn_\tw$ obtained as the coefficient of $z^{-1}$ in $Y_\tw(\kappa_g,z)$, where
\begin{gather}\label{eqn:va:kappag}
	\kappa_g:=
	2b^+(-\tfrac12)b^-(-\tfrac12)\vv
	+
	\sum_{j=1}^{12} 2v_{g,j}^+(-\tfrac12)v_{g,j}^-(-\tfrac12)\vv,
\end{gather}
define $\imath^\tw$ to be the operator that is the identity on $M$, and multiplication by $i$ on $M_\tw$,
and let $\imath^{\tw,\odd(g)}$ be the operator that is $\imath^\tw$ if $o(g)$ is odd, 
and the identity otherwise.
We
now define $\R_g$ by setting
\begin{gather}\label{eqn:Rg}
	\R_g(\tau,z)
	:=
	-\lim_{u\to 1}
	\tr(
	(-1)^F
	\imath^{\tw,\odd(g)}
	gy^{J(0)}
	u^{K_g(0)}
	q^{L(0)-\frac{c}{24}}|\msn_\tw).
\end{gather}

The following theorem is the main result of this section.
\begin{thm}\label{thm:va:wjf}
For each $g\in M_{24}$ the function $\R_g$ is a well-defined weak Jacobi form
of weight $0$ and index $2$ for $\Gamma_0(o(g))$ that satisfies (\ref{eqn:va:phigtoPg}).
Moreover, the $\R_g$ are the bigraded traces associated to a bigraded module for $M_{24}$.
\end{thm}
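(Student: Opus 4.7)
The plan is to expand the trace defining $\R_g$ using the tensor decomposition $M = A(\gt{a})\otimes \uA(\gt{b})\otimes A(\gt{v})$ and the corresponding decomposition of $M_\tw$. The Mathieu group $M_{24}$ acts trivially on the $\gt{a}$- and $\gt{b}$-factors, $J(0)$ is supported on $A(\gt{a})$ and $\uA(\gt{b})$, the regulator $K_g(0)$ of (\ref{eqn:va:kappag}) couples a $\gt{b}$-piece to a $g$-dependent $\gt{v}$-piece, and $L(0)$, $(-1)^F$ and $\imath^{\tw,\odd(g)}$ factor across the decomposition. Hence the trace in (\ref{eqn:Rg}) breaks into a product of standard traces, one per tensor factor. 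Since $M_{24}$ acts on $\gt{v}$ as signed permutations of an orthonormal basis, the $g$-twined traces on $A(\gt{v})$ and $A(\gt{v})_\tw$ are eta quotients governed by the Frame shape of $g$, while the $A(\gt{a})$ and $\uA(\gt{b})$ traces produce Jacobi theta functions at argument $2z$, in close parallel to the derivation of (\ref{eqn:va:phi}) for $\R = \R_e$.

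Granted this explicit expression, I would verify weak modularity by invoking standard transformation laws: the Frame-shape eta quotient from the $A(\gt{v})$ factor is modular of weight $0$ on $\Gamma_0(o(g))$ because every index appearing in the quotient divides $o(g)$; and the remaining theta and eta factors from $A(\gt{a})$ and $\uA(\gt{b})$ contribute total weight $0$ and total index $2$, by the same fermionic/bosonic cancellations that are responsible for $\R=\R_e$ being of weight $0$ and index $2$ as asserted below (\ref{eqn:va:phi}). To establish the limit (\ref{eqn:va:phigtoPg}), I would extract the $q^{0}$-coefficient: only the lowest $L(0)$-weight vectors in $M_\tw^0$ contribute, and after the $u\to 1$ limit is applied to the bosonic factor from $\uA(\gt{b})_\tw$, the remaining sum collapses to a geometric series summing to $y^{-2}/(1-\rjg_1(g)(-y^2)+(-y^2)^2)$, which by definition of $P_g$ in (\ref{eqn:rh:Pg}) equals $y^{-2}P_g(-y^2)^{-1}$, as required.

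The final claim, that the $\R_g$ are the bigraded traces of a single $M_{24}$-module, is in essence built into the construction: $\msn_\tw$ carries a commuting action of $M_{24}$, is bigraded by $(L(0), J(0))$, and (modulo the regularization) each $\R_g$ is the $(q,y)$-expansion of a trace on this bigraded space. The main technical obstacle I anticipate is justifying the $u\to 1$ limit in (\ref{eqn:Rg}): the bosonic modes in $\uA(\gt{b})_\tw$ make individual bigraded pieces of $\msn_\tw$ infinite-dimensional, so a finite $M_{24}$-character in each bidegree is only extracted after regularization. I would address this by adapting the argument of \cite{MR3922534}, in which the analogous $u$-regularization is realized as projection onto the kernel of a twisted superconformal supercharge, to the present $M_{24}$-equivariant setting; the surviving subspaces would then furnish the required bigraded $M_{24}$-module.
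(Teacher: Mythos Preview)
Your plan matches the paper's strategy: the trace computation you sketch is what the paper states as Proposition~\ref{pro:va:phig} (formula (\ref{eqn:va:phig})), and the proof of Theorem~\ref{thm:va:wjf} then simply reads off modularity and the limit (\ref{eqn:va:phigtoPg}) from that explicit expression, dismissing the bigraded-module claim with ``by construction''.  Your discussion of the $u\to 1$ regularization is in fact more careful than the paper's own treatment.

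Two slips to fix.  First, your account of (\ref{eqn:va:phigtoPg}) is muddled: the required limit $y^{-2}P_g(-y^2)^{-1}=y^{-2}+\rjg_1'(g)+y^2$ is a three-term Laurent polynomial, not a geometric series, and the fraction $y^{-2}/(1-\rjg_1(g)(-y^2)+(-y^2)^2)$ that you wrote is $y^{-2}P_g(-y^2)$, i.e.\ the wrong side of the reciprocal; one simply reads the $q^0$-term off of (\ref{eqn:va:phig}).  Second, you do not address well-definedness: the paper's proof begins by noting that $\kappa_g$ in (\ref{eqn:va:kappag}) is independent of the choice of isotropic eigenbasis $\{v_{g,j}^\pm\}$, so that $\R_g$ depends only on $g$ and not on auxiliary choices.  (Minor: $M_{24}$ acts on $\gt{v}$ by genuine permutations of the orthonormal basis, not signed ones; it is the lift to $\Spin(\gt{v})$ that requires care.)
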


Before proving Theorem \ref{thm:va:wjf} we present an explicit expression for $\R_g$, similar to (\ref{eqn:va:phi}).
For this we choose 
$\nu_{g,j}\in \CC$, 
for each $g\in M_{24}$ and $1\leq j\leq 12$, such that 
\begin{gather}\label{eqn:gvgjpm}
	g v_{g,j}^\pm = \nu_{g,j}^{\pm 2}v_{g,j}^\pm,
\end{gather}
where $v_{g,j}^\pm$ is as in (\ref{eqn:vgjmpvgkpm}), and we also require that
\begin{gather}\label{eqn:gprod}
	g\vv_{\tw,\gt{v},g} = \nu_{g}\vv_{\tw,\gt{v},g}
\end{gather}
in 
$\CM(\gt{v})$ (cf.\ (\ref{eqn:CMgtv=Cliffgtvetc})), 
where 
$\nu_g:=\prod_{j=1}^{12}\nu_{g,j}$.
Note that if we just require (\ref{eqn:gvgjpm}), then the left-hand side of (\ref{eqn:gprod}) is either the right-hand side of (\ref{eqn:gprod}) or its negative, 
so in practical terms, the problem of choosing $\nu_{g,j}$ as above is just a matter of choosing a square root of the eigenvalue of $g$ attached to the eigenvector $v_{g,j}^+$, for each $j$, and then replacing one of these with its negative in the case that (\ref{eqn:gprod}) doesn't already hold.

Checking the character table of $M_{24}$ (see Table \ref{tab:ct}) we observe that for every $g\in M_{24}$ the $g$-fixed subspace of the unique non-trivial $24$-dimensional representation is at least $2$-dimensional. 
Thus, after relabelling if necessary, we may assume 
that $\nu_{g,1}=1$. 

For notational convenience we recall the symbol $\varepsilon(d)$, typically defined just for $d$ odd, which appears in the theory of modular forms of half-integer weight: We have 
\begin{gather}
\varepsilon(d):=
\begin{cases}
1&\text{when $d\equiv 1 \xmod 4$,}\\
i&\text{when $d\equiv 3 \xmod 4$.}
\end{cases}
\end{gather}  

With $\nu_{g,j}$ chosen as above we now define
\begin{gather}
	\eta_{\pm g}(\tau):=q\prod_{n>0}\prod_{j=1}^{12}(1\mp\nu_{g,j}^{-2}q^n)(1\mp\nu_{g,j}^2q^n),
	\\
	C_{-g}:=(-i)\varepsilon((-1)^{o(g)})\prod_{j=1}^{12}(\nu_{g,j}+\nu_{g,j}^{-1}),
	\\
	D_{g}:=(-i)\varepsilon((-1)^{o(g)})\prod_{j=2}^{12}(\nu_{g,j}-\nu_{g,j}^{-1}).
\end{gather}
See Table \ref{tab:da} for the values of the $C_{-g}$ and $D_g$. Table \ref{tab:da} also specifies the $\eta_{\pm g}$ concretely: If $g$ has cycle shape $k_1^{m_1}\cdot k_2^{m_2}\dots$ then $\eta_g(\tau) = \eta(k_1\tau)^{m_1}\eta(k_2\tau)^{m_2}\dots$, and $\eta_{-g}$ is obtained from $\eta_g$ by replacing each $\eta(k\tau)$, for $k$ odd, with $\eta(2k\tau)\eta(k\tau)^{-1}$ (and leaving $\eta(k\tau)$, for $k$ even, as it is).

It develops that the product defining $C_{-g}$ vanishes whenever $o(g)$ is even,  so it is not wrong to simply write $C_{-g}=\prod_{j=1}^{12}(\nu_{g,j}+\nu_{g,j}^{-1})$. 
Note that we have $C_{-g}=2\prod_{j=2}^{12}(\nu_{g,j}+\nu_{g,j}^{-1})$, according to our assumption on $\nu_{g,1}$.
The next result follows directly from our construction.
\begin{pro}\label{pro:va:phig}
For $g\in M_{24}$ we have
\begin{gather}\label{eqn:va:phig}
\begin{split}
	\R_g(\tau,z) 
	&=
	-\frac12\frac{\theta_4(\tau,2z)}{\theta_4(\tau,0)}\frac{\eta_{g}(\frac{\tau}{2})}{\eta_{g}(\tau)}
	+\frac12\frac{\theta_3(\tau,2z)}{\theta_3(\tau,0)}\frac{\eta_{-g}(\frac{\tau}{2})}{\eta_{-g}(\tau)}
	\\
	&\quad
	-\frac12\frac{\theta_2(\tau,2z)}{\theta_2(\tau,0)}C_{-g}\eta_{-g}({\tau})
	-\frac12\frac{i\theta_1(\tau,2z)}{\eta(\tau)^3}D_{g}\eta_g(\tau).
\end{split}
\end{gather}
\end{pro}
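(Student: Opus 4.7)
The plan is to evaluate the trace (\ref{eqn:Rg}) directly and match it, term by term, with the right-hand side of (\ref{eqn:va:phig}). Since $\msn_\tw = M^1 \oplus M_\tw^0$, where $M^j$ and $M_\tw^j$ are the $(-1)^j$-eigenspaces of $\gt{z} = \gt{z}_\gt{a}\otimes\gt{z}_\gt{b}\otimes\gt{z}_\gt{v}$, one may replace the trace over $\msn_\tw$ by
\begin{equation*}
\tr(\,\cdot\,|\,\msn_\tw) = \tfrac{1}{2}\tr((1-\gt{z})\,\cdot\,|\,M) + \tfrac{1}{2}\tr((1+\gt{z})\,\cdot\,|\,M_\tw).
\end{equation*}
This splits $\R_g$ into four pieces, one for each choice of untwisted versus canonically twisted sector, and with or without a $\gt{z}$-insertion. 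I would compute these four pieces individually and identify them with the four summands in (\ref{eqn:va:phig}), matching $M$ without $\gt{z}$ to the $\theta_4$-term, $M$ with $\gt{z}$ to the $\theta_3$-term, $M_\tw$ without $\gt{z}$ to the $\theta_2$-term, and $M_\tw$ with $\gt{z}$ to the $\theta_1$-term.

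Each of the four traces factors along the decomposition $M = A(\gt{a})\otimes\uA(\gt{b})\otimes A(\gt{v})$, and similarly for $M_\tw$, so it suffices to compute the trace on each tensor factor. I would use the eigenbasis $\{v_{g,j}^\pm\}$ of $\gt{v}$ satisfying (\ref{eqn:vgjmpvgkpm}) and (\ref{eqn:gvgjpm}), together with the standard fermionic (resp.\ bosonic) character formulas for $A(\gt{v})$, $A(\gt{a})$ (resp.\ $\uA(\gt{b})$) and their canonically twisted modules. Because $\nu_{g,1}=1$, the pair $v_{g,1}^\pm$ supplies the $y^{\pm 2}$ modes that $y^{J(0)}$ tracks and that promote index $0$ to index $2$, while the other eleven pairs contribute $\nu_{g,j}^{\pm 2}$. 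Recognizing the resulting infinite products in $y$ via the product expansions (\ref{eqn:sf:jac}) of $\theta_1,\dots,\theta_4$ produces a ratio $\theta_i(\tau,2z)/\theta_i(\tau,0)$ in each of the three non-twisted-$\gt{z}$-inserted cases, and $\theta_1(\tau,2z)/\eta(\tau)^3$ in the remaining one (since $\theta_1(\tau,0)=0$ is replaced by its ``regularized'' value via the well-known identity $\theta_1'(\tau,0)=2\pi\eta(\tau)^3$). The $q$-dependent remainders collapse into the $\eta$-quotients $\eta_g(\tau/2)/\eta_g(\tau)$, $\eta_{-g}(\tau/2)/\eta_{-g}(\tau)$, $\eta_{-g}(\tau)$, and $\eta_g(\tau)$ respectively, following the dictionary provided after Table \ref{tab:da}.

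The constants $C_{-g}$ and $D_g$ then arise from the zero-mode action of $g$ on the Clifford and Weyl modules $\CM(\gt{v})$ and $\WM(\gt{b})$ in the twisted sector, realized via (\ref{eqn:CMgtv=Cliffgtvetc}) and (\ref{eqn:WMgtb=Weylgtbetc}). Explicitly, on the $\CM(\gt{v})$ factor one finds a product $\prod_{j=1}^{12}(\nu_{g,j}+\nu_{g,j}^{-1})$ in the $\gt{z}_\gt{v}=+1$ case, and $\prod_{j=2}^{12}(\nu_{g,j}-\nu_{g,j}^{-1})$ in the $\gt{z}_\gt{v}=-1$ case (the $j=1$ factor being absent since $\nu_{g,1}-\nu_{g,1}^{-1}=0$). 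The overall prefactor $(-i)\varepsilon((-1)^{o(g)})$ combines the phases $\gt{z}_\gt{a}\vv_{\tw,\gt{a}}=i\vv_{\tw,\gt{a}}$, $\gt{z}_\gt{b}\vv_{\tw,\gt{b}}=-i\vv_{\tw,\gt{b}}$, and $\imath^{\tw,\odd(g)}$, together with the rational-power-of-$q$ shift coming from $L(0)-c/24$ on the twisted sector.

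The main obstacle I expect is bookkeeping: the sign from $(-1)^F$, the $\tfrac{1}{2}$ from the projectors, the overall $-1$ in (\ref{eqn:Rg}), the phases attached to the canonically twisted vacua $\vv_{\tw,\gt{a}},\vv_{\tw,\gt{b}},\vv_{\tw,\gt{v},g}$, and the parity-sensitive factor $\imath^{\tw,\odd(g)}$ all need to be combined consistently so that the overall normalizations $-\tfrac{1}{2}$ of the four summands in (\ref{eqn:va:phig}) emerge with their correct signs and so that $C_{-g}$ and $D_g$ appear in exactly the form stated. Once this bookkeeping is carried out, the four computed traces assemble into (\ref{eqn:va:phig}), completing the proof.
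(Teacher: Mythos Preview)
Your proposal is correct and is precisely the computation the paper has in mind: the paper gives no proof of this proposition beyond the sentence ``The next result follows directly from our construction,'' so your projector decomposition of $\msn_\tw$ into four pieces, factorization along $A(\gt{a})\otimes\uA(\gt{b})\otimes A(\gt{v})$, and identification of the resulting free-field characters with the $\theta_i$- and $\eta_{\pm g}$-expressions is exactly the intended argument. One small point to watch when you carry out the bookkeeping: the $y$-dependence tracked by $J(0)$ comes from the $\gt{a}\oplus\gt{b}$ sector (the $\mathcal{N}=2$ $U(1)$ current built there), not from the pair $v_{g,1}^\pm$ in $\gt{v}$; the assumption $\nu_{g,1}=1$ is instead what makes the limit $u\to 1$ in (\ref{eqn:Rg}) finite and produces the $\eta(\tau)^3$ in the denominator of the $\theta_1$-term, since the would-be zero from $\nu_{g,1}-\nu_{g,1}^{-1}$ is cancelled against the vanishing of $\theta_1(\tau,0)$ before the limit.
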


\begin{proof}[Proof of Theorem \ref{thm:va:wjf}.]
We first note that the expression (\ref{eqn:Rg}) for $\R_g$ is well-defined because $\kappa_g$ (see (\ref{eqn:va:kappag})) is independent of any choices made in specifying the vectors $v_{g,j}^\pm$ of (\ref{eqn:vgjmpvgkpm}). Given this, the fact that $\R_g$ is a weak Jacobi form of weight $0$ and index $2$ for $\Gamma_0(o(g))$ (possibly with character), for each $g\in M_{24}$, can be checked directly using (\ref{eqn:va:phig}). The specialization identity (\ref{eqn:va:phigtoPg}) also follows directly from (\ref{eqn:va:phig}).
Finally, the $\R_g$ are the bigraded traces associated to a bigraded $M_{24}$-module by construction.
\end{proof}

\section{Concluding Remarks}

In this work we have interpreted the rational homotopy groups of a K3 surface as modules\footnote{The ``first'' rational homotopy group $\varrho_1({\rm K3})$ is a virtual module for $M_{24}$ in our interpretation.} 
$R_j$
for the sporadic simple Mathieu group $M_{24}$ (see Theorem \ref{thm:rh:rhoj}),
and we have 
constructed a vertex algebra that associates Jacobi forms $\R_g$ to the elements of 
$M_{24}$ in a compatible way (see Theorem \ref{thm:va:wjf}). 
This is reminiscent of Mathieu moonshine (see 
\cite{MR985505,MR2793423,MR2748168,MR2985326,MR3021323,MR3539377}),
wherein Jacobi forms that generalize the K3 elliptic genus are associated to the elements of $M_{24}$. 
Given the appearance of $M_{24}$ and K3 surfaces in both instances, it is natural to ask if these two phenomena are related.

An important difference between these two situations is that
in the setting of rational K3 homotopy the forms $\R_g$ arising (see (\ref{eqn:Rg})) are weak Jacobi forms of weight 0 and index 2 (with level), whereas in Mathieu moonshine they are weak Jacobi forms $Z_g$ of weight 0 and index 1 (with level).
Interestingly, there is a concrete connection between the $g=e$ cases of these two families: 
They are different specializations of the single weak Jacobi form
\begin{gather}
\label{eqn:H}
\begin{split}
\Hzw(\tau,z,w) :=& 
	-\frac12\frac{\theta_4(\tau,z-w)\theta_4(\tau,z+w)}{\theta_4(\tau,0)^2}\frac{\eta(\frac{\tau}{2})^{24}}{\eta(\tau)^{24}}\\
	&+\frac12\frac{\theta_3(\tau,z-w)\theta_3(\tau,z+w)}{\theta_3(\tau,0)^2}\frac{\eta({\tau})^{48}}{\eta(\frac{\tau}{2})^{24}\eta(2\tau)^{24}}\\
	&-\frac12\frac{\theta_2(\tau,z-w)\theta_2(\tau,z+w)}{\theta_2(\tau,0)^2}2^{12}\frac{\eta({2\tau})^{24}}{\eta(\tau)^{24}}.
\end{split}
\end{gather}
Indeed, we have $\Hzw(\tau,z,z) = \R(\tau,z)$ (cf.\ (\ref{eqn:va:phi})), and $\Hzw(\tau,z,0)=Z(\tau,z)$.

Note that $\Hzw$ in (\ref{eqn:H}) is a weak Jacobi form of {lattice index} (cf.\ e.g.\ \cite{MR3123592,MR3991423}) for the lattice $A_1\oplus A_1$, whereas a Jacobi form of lattice index $A_1$ is a Jacobi form of index $1$ in the sense of \cite{MR781735}.
Roughly, $\Hzw$ is like a Jacobi form in the classical sense, except that it has two independent elliptic variables with index $1$ in each of them.

The lattice-index Jacobi form $\Hzw$ appears in \cite{MR3668969} (denoted there by $Z^{s\natural}(\tau,z,w)$),  in connection with the enumerative geometry of K3 surfaces.
Specifically, $\Hzw$ is related to (reduced refined) K3 Gopakumar--Vafa invariants, and refined K3 BPS invariants. 
Let $\LL$ denote the Leech lattice, being the unique unimodular even lattice of rank $24$ with no vectors of square-length $2$.
In \S~3 of \cite{MR3668969} a twining $\Hzw_g$ of $\Hzw$ is defined, for each symmetry $g$ of the Conway group $\Co_0:=\Aut(\LL)$ that fixes a $4$-dimensional subspace of $\LL\otimes \CC$. 
The construction of the $\Hzw_g$ is closely related to what we have presented in \S~\ref{sec:va}: 
The $\Hzw_g$ are defined using the spaces that we have denoted $A(\gt{v})$ and $A(\gt{v})_\tw$.
Moreover, it can be checked that $\R_g(\tau,z) = \Hzw_g(\tau,z,z)$ for each $g\in M_{24}<\Co_0$ that fixes a $4$-dimensional subspace of $\LL\otimes \CC$.

This raises the question of whether or not there exist lattice-index Jacobi forms $\Hzw_g$, for each $g\in M_{24}$, such that $\Hzw_g(\tau,z,z)=\R_g(\tau,z)$ and $\Hzw_g(\tau,z,0)=Z_g(\tau,z)$. 
Unfortunately the answer is negative because it can be checked that $\Hzw_g(\tau,z,0)$ differs from $Z_g(\tau,z)$ for certain $g$, e.g.\ $g$ with cycle shape $3^8$ (see \cite{MR3465528}). 
However, it may be interesting to look more closely at the relationship between rational K3 homotopy, enumerative K3 geometry, and K3 compactification of string theory.
  
We now explain a geometric aspect to the coincidence $\Hzw(\tau,z,z)=\R(\tau,z)$. 
A key motivation for $\R(\tau,z)$ is the fact that its $q$-constant term (see \ref{eqn:va:limImtauphi}) is (a nomalization of) the K3 Poincar\'e series (see (\ref{eqn:PX})).
This fact can be read off from the K3 Hodge diamond (\ref{eqn:Hodgediamond}).
\begin{gather}\label{eqn:Hodgediamond}
\begin{array}{ccccccc}
& & & 1 & & & \\
& & 0 & & 0 & & \\
& 1 & & 20 & & 1 & \\
& & 0 & & 0 & & \\
& & & 1 & & &
\end{array}
\end{gather}
We can motivate $\Hzw$ by noting that its $q$-constant term is the (normalized) K3 Hodge polynomial,
\begin{gather}
	\Hzw(\tau,z,w) = y^{-1}v^{-1} + yv^{-1} + 20 + y^{-1}v + yv + O(q), 
\end{gather}
which fully encodes the data of (\ref{eqn:Hodgediamond}).
It would be interesting to have a homotopy theoretic interpretation for the K3 Hodge polynomial.

Finally we mention the problem of generalizing the results of this work to other manifolds.

\section*{Acknowledgements}

F.C.\ and Y.H.\ thank Vyacheslav Lysov for discussions. 
F.C.\ thanks Alessandro Mininno for comments on an early draft. 
F.C.\ and Y.H.\ are supported by the Leverhulme Research Project Grant 2022 ``Topology from cosmology: axions, astrophysics and machine learning". 
F.C.\ is supported by the Italian Ministry of Universities and Research (MUR) through the grant “StringGeom” (grant CUP code no.~I33c25000420006).
J.D.\ gratefully acknowledges support from Academia Sinica (AS-IA-113-M03) and the National Science and Technology Council of Taiwan (112-2115-M-001-006-MY3).

\appendix

\section{Plethysm}\label{sec:rtp}
	
Here we use a general representation-theoretic construction to ``upgrade'' the Plethystic expansion
(\ref{eqn:rh:POmegaVple}) from an identity of power series with integer coefficients to an identity of power series with (virtual) representations for coefficients. To explain this let 
Let $G$ be a finite group and let $R$ be the representation ring of (finitely generated modules for) $G$. 
Concretely, we may identify $R$ with the abelian of group of formal integer sums of equivalence classes of irreducible representations of $G$. 
This abelian group becomes a ring when we let the tensor product of $G$-modules define the multiplication.
(We refer to \S~3 of \cite{MR4712414} for a detailed description of $R$.)

In addition to tensor product, we also have the operations of alternating and symmetric powers on $R$.
Write $\Lambda^k(U)$ for the $k$-th alternating power of $U$, and write $S^k(U)$ for the $k$-th symmetric power of $U$, and recall that these operations satisfy
\begin{gather}\label{eqn:LambdakUSkU}
\Lambda^k(-U)=(-1)^kS^k(U)
\end{gather}
for arbitrary $U\in R$. (Cf.\ loc.\ cit.)

Now consider the ring $R[[x]]$, of formal power series with coefficients in $R$. 
Using alternating and symmetric powers we may define maps $R\to R[[x]]$ by setting
\begin{gather}\label{eqn:LambdaxUSxU}
	\Lambda_x(U) := \sum_{k\geq 0 }\Lambda^k(U)x^k,\quad
	S_x(U) := \sum_{k\geq 0 }S^k(U)x^k.
\end{gather}
Then $\Lambda_x(U+V) = \Lambda_x(U)\Lambda_x(V)$, and similarly for $S_x$, and as a consequence of (\ref{eqn:LambdakUSkU}) we have $\Lambda_{x}(-U) = S_{-x}(U)$. It follows that 
\begin{gather}
	\Lambda_{x}(U)S_{-x}(U) = 1
\end{gather}
in $R[[x]]$, for all $U\in R$.
In particular, $\Lambda_x(U)$ and $S_x(U)$ are invertible in $R[[x]]$. 

The full subgroup $R[[x]]^*$ of invertible elements of $R[[x]]$ is composed of the series $f\in R[[x]]$ with $f(0)=\pm 1$.
The series $\Lambda_x(U)$ and $S_x(U)$ both have this form, for all $U\in R$, but of course there are elements in $R[[x]]^*$ that are not of this form.
With the next two results we show that the constructions (\ref{eqn:LambdaxUSxU}) generate $R[[x]]^*$, in a certain sense.
\begin{lem}\label{lem:f=prodLambdaxnUn}
For any invertible $f\in R[[x]]$ with $f(0)=1$ there exist $U_n\in R$, for $n> 0$, such that 
\begin{gather}\label{eqn:f=prodLambdaxnUn}
	f(x) = \prod_{n>0} S_{x^n}(U_n).
\end{gather}
\end{lem}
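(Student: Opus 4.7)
The plan is to construct the $U_n\in R$ one at a time by recursively stripping off the leading nontrivial term of $f$. First I would note that any $g\in R[[x]]$ with $g(0)=1$ is a unit in $R[[x]]$, so all divisions performed below are well-defined.

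Setting $f_1:=f$, I proceed by induction. Suppose that $U_1,\dots,U_{n-1}\in R$ and $f_n\in R[[x]]$ have been produced so that $f_n\equiv 1\pmod{x^n}$ and
\begin{gather*}
f(x) \;=\; f_n(x)\prod_{k=1}^{n-1}S_{x^k}(U_k).
\end{gather*}
Write $f_n(x)=1+c_n x^n+O(x^{n+1})$ with $c_n\in R$, define $U_n:=c_n$, and set $f_{n+1}:=f_n/S_{x^n}(U_n)$. Because $S_{x^n}(U_n)\equiv 1+U_n x^n\pmod{x^{n+1}}$ and its reciprocal is therefore $\equiv 1-U_n x^n\pmod{x^{n+1}}$, multiplying out yields $f_{n+1}\equiv (1+U_n x^n)(1-U_n x^n)\equiv 1\pmod{x^{n+1}}$, which advances the induction one step.

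To conclude I would observe that $S_{x^n}(U_n)\equiv 1\pmod{x^n}$, so the formal infinite product $\prod_{n>0}S_{x^n}(U_n)$ converges $x$-adically in $R[[x]]$ and satisfies $\prod_{n>0}S_{x^n}(U_n)\equiv \prod_{n=1}^{N}S_{x^n}(U_n)\pmod{x^{N+1}}$ for every $N$. Combining the inductive factorization with $f_{N+1}\equiv 1\pmod{x^{N+1}}$ gives $f(x)\equiv \prod_{n=1}^{N}S_{x^n}(U_n)\pmod{x^{N+1}}$ for every $N$, which forces the identity (\ref{eqn:f=prodLambdaxnUn}) in $R[[x]]$.

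The argument presents no real obstacle beyond bookkeeping. The essential points are that $R$ is closed under subtraction, so the coefficients $c_n$ extracted at each stage automatically qualify as legitimate elements $U_n\in R$ (even though they are typically virtual combinations of irreducibles rather than genuine characters), and that $S_{x^n}(U_n)$ contributes a single term $U_n x^n$ in its leading nontrivial degree, which is exactly what is needed for the recursion to isolate $U_n$ from $f_n$. Everything else is the standard $x$-adic bootstrap.
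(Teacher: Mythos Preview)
Your proof is correct and follows essentially the same iterative strategy as the paper: strip off the leading nontrivial coefficient of the running remainder $f_n$ at each stage, use it to define $U_n$, and divide through by $S_{x^n}(U_n)$ to push the remainder one degree deeper. Your write-up is in fact a bit more careful than the paper's (you spell out the $x$-adic convergence of the infinite product, and your sign convention $U_n=c_n$ is the right one, whereas the paper's stated choice $U_n=-g_n(0)$ appears to be a harmless slip).
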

Note that the infinite product on the right-hand side of (\ref{eqn:f=prodLambdaxnUn}) makes sense, because only finitely many factors involve any given positive power of $x$,
so only finitely many summands appear in the the coefficient of any given power of $x$ in (\ref{eqn:f=prodLambdaxnUn}).
\begin{proof}[Proof of Lemma \ref{lem:f=prodLambdaxnUn}]
Suppose that $f_n = 1 + g_n(x)x^n$ for some $g_n\in R[[x]]$, for some positive integer $n$. 
Then 
$f_n(x) = 1+g_n(0)x^n + O(x^{n+1})$, and $S_{x^n}(-g_n(0)) = 1-g_n(0)x^n +O(x^{n+1})$, 
so $f_n(x)S_{x^n}(-g_n(0)) = 1+g_{n+1}(x)x^{n+1}$ for some $g_{n+1}\in R[[x]]$. 
We obtain the lemma by applying this procedure iteratively, starting with $f_1 = f$, and taking $U_n = -g_n(0)$ at each iteration.
\end{proof}

We are ready to state and prove our representation-theoretic counterpart to the plethystic expansion (\ref{eqn:rh:POmegaVple}).
\begin{lem}\label{lem:repthyple}
For any invertible $f\in R[[x]]$ with $f(0)=1$ there exist $U_n\in R$, for $n\geq 0$, such that 
\begin{gather}\label{eqn:repthyple}
	f(x) = \prod_{j>0} \Lambda_{x^{2j-1}}(U_{2j-1}) S_{x^{2j}}(U_{2j}).
\end{gather}
\end{lem}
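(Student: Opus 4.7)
The plan is to mirror the iterative construction of Lemma \ref{lem:f=prodLambdaxnUn}, but at each step $n$ to multiply by a factor of type $\Lambda_{x^n}$ when $n$ is odd and of type $S_{x^n}$ when $n$ is even. The central observation is that both
\[
 \Lambda_{x^n}(U) = 1 + Ux^n + O(x^{2n})\quad\text{and}\quad S_{x^n}(U) = 1 + Ux^n + O(x^{2n})
\]
share the same leading behaviour, so either can be used interchangeably to absorb the coefficient of $x^n$ in any series of the form $1 + Ax^n + O(x^{n+1})$. Moreover each such factor is invertible in $R[[x]]$, with $\Lambda_{x^n}(U)^{-1} = \Lambda_{x^n}(-U)$ and $S_{x^n}(U)^{-1} = S_{x^n}(-U)$; both identities follow from the multiplicativities $\Lambda_x(U+V) = \Lambda_x(U)\Lambda_x(V)$ and $S_x(U+V) = S_x(U)S_x(V)$ together with $\Lambda_x(0) = S_x(0) = 1$.

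First I would set $f_1 := f$ and construct the $U_n \in R$ inductively. Suppose $f_n$ has been produced in the form $f_n = 1 + g_n(x) x^n$ for some $g_n \in R[[x]]$. Setting $U_n := g_n(0)$, define
\[
 f_{n+1}(x) :=
\begin{cases}
 f_n(x)\,\Lambda_{x^n}(-U_n) & \text{if $n$ is odd,}\\
 f_n(x)\,S_{x^n}(-U_n) & \text{if $n$ is even.}
\end{cases}
\]
The leading-order computation above immediately gives $f_{n+1}(x) = 1 + O(x^{n+1})$, so the induction continues and yields a sequence $(U_n)_{n>0}$ in $R$.

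Finally I would telescope and take the $x$-adic limit. Using the identities from the first paragraph, the recursion is equivalent to $f_n = f_{n+1}\cdot\varphi_n(U_n)$, where $\varphi_n = \Lambda_{x^n}$ when $n$ is odd and $\varphi_n = S_{x^n}$ when $n$ is even, so iterating gives
\[
 f(x) = f_{N+1}(x)\cdot\prod_{n=1}^{N}\varphi_n(U_n)
\]
for every $N$; the order of the product is immaterial because $R[[x]]$ is commutative. Since $f_{N+1}(x) = 1 + O(x^{N+1})$, passing to the formal limit as $N\to\infty$ in the $x$-adic topology on $R[[x]]$ yields
\[
 f(x) = \prod_{j>0}\Lambda_{x^{2j-1}}(U_{2j-1})\,S_{x^{2j}}(U_{2j}),
\]
as required. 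There is no genuine obstacle: the argument is structurally identical to that of Lemma \ref{lem:f=prodLambdaxnUn}, and the substitution of $\Lambda_{x^{2j-1}}$ for $S_{x^{2j-1}}$ at odd orders is seamless precisely because $\Lambda_{x^n}(U)$ and $S_{x^n}(U)$ agree through order $x^n$.
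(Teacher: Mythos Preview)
Your proof is correct, but the paper takes a shorter route. Rather than rerunning the iterative peeling argument with a parity-dependent choice of factor, the paper simply applies Lemma~\ref{lem:f=prodLambdaxnUn} to $f(-x)$ to obtain $f(-x)=\prod_{n>0}S_{x^n}(U_n')$, substitutes $x\mapsto -x$, and then invokes the identity $S_{-x}(U)=\Lambda_x(-U)$ (from (\ref{eqn:LambdakUSkU})) to convert the odd-index factors $S_{-x^{2j-1}}(U'_{2j-1})$ into $\Lambda_{x^{2j-1}}(-U'_{2j-1})$; setting $U_n=(-1)^nU'_n$ finishes in one line. Your approach has the advantage of making transparent \emph{why} the mixed $\Lambda$/$S$ factorisation works---namely that $\Lambda_{x^n}(U)$ and $S_{x^n}(U)$ agree through order $x^n$, so either can absorb the leading coefficient at each step---whereas the paper's argument is more economical and foregrounds the structural duality $\Lambda_x(-U)=S_{-x}(U)$ as the single fact driving the passage from Lemma~\ref{lem:f=prodLambdaxnUn} to Lemma~\ref{lem:repthyple}.
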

\begin{proof}
According to Lemma \ref{lem:f=prodLambdaxnUn} we have $f(-x)=\prod_{n>0}S_{x^n}(U_n')$ for some $U_n'\in R$.
Then $f(x) = \prod_{j>0}S_{-x^{2j-1}}(U_{2j-1}')S_{x^{2j}}(U_{2j}')$.
To obtain (\ref{eqn:repthyple}) we take $U_n = (-1)^nU_n'$.
\end{proof}

In Lemma \ref{lem:repthyple}, a factor $\Lambda_{x^n}(U_n)$ serves as the representation-theoretic counterpart to $(1+x^n)^{d}$, for $d =\dim U_n$, because if we write $\dim$ for the operator $R[[x]]\to \ZZ[[x]]$ that acts coefficient-wise as the usual dimension operator on $R$, then 
\begin{gather}
	\dim \Lambda_{x^n}(U_n) = (1+x^n)^{\dim U_n}.
\end{gather}
Similarly, 
\begin{gather}\label{eqn:dimSxnU}
	\dim S_{x^n}(U_n) = (1-x^n)^{-\dim U_n}
	.
\end{gather}

\section{Bounds}\label{sec:bnd}

In this section we establish the bounds that we require for the proof of Theorem \ref{thm:rh:rhoj}.

To begin we replace $x$ with $-x$ in (\ref{eqn:rh:def-rhojg}) in order to obtain
\begin{gather}\label{eqn:rh:rhoj-1}
\log(1+\rjg_1(g) x+ x^2) 
=-\sum_{j,k>0}
(-1)^{j}
\frac1k
\rjg_j(g^k) x^{jk}
	=
	-\sum_{m>0}\sum_{d|m}
	(-1)^{d}
	\frac{d}{m}\rjg_d(g^{\frac{m}{d}})x^m
.
\end{gather}
Now set $\alpha(g):=\frac12\rjg_1(g)+\frac12\sqrt{\rjg_1(g)^2-4}$. Then we have
\begin{gather}\label{eqn:factorP_g}
1+\rjg_1(g)x+x^2=(1+\alpha(g) x)(1+\alpha(g)^{-1}x).
\end{gather}
After plugging (\ref{eqn:factorP_g}) into the left-hand side of (\ref{eqn:rh:rhoj-1}) and extracting coefficients we obtain 
\begin{gather}\label{eqn:rh:rhoj-2}
	(-1)^{m}
	a(g,m)
	=
	\sum_{d|m}
	(-1)^{d}
	d
	\rjg_d(g^{\frac{m}{d}})
	,
\end{gather}
where, to ease notation, we have used $a(g,m):=\alpha(g)^m+\alpha(g)^{-m}$. 

We would like to use (\ref{eqn:rh:rhoj-2}) to rewrite $\rjg_m(g)$ in terms of $a(g,m)$. 
We can actually do this directly in the special case that $m$ is coprime to the order of $g$, for in that case (\ref{eqn:rh:rhoj-2}) reduces to
\begin{gather}\label{eqn:rh:agm-cop}
	(-1)^{m}
	a(g,m)
	=
	\sum_{d|m}
	(-1)^d
	d\rjg_d(g)
	.
\end{gather}
M\"obius inversion then gives us
\begin{gather}\label{eqn:rh:rhomg-cop}
	\rjg_m(g) = 
	(-1)^m
	\frac1m
	\sum_{d|m}\mu\left(\frac{m}{d}\right)
	(-1)^da(g,d).
\end{gather}
In the next two lemmas we establish counterparts to (\ref{eqn:rh:rhomg-cop}) for general $m$. For this we agree to define $a(g,r):=0$ in case $r$ is not an integer.
\begin{lem}\label{lem:rhomg-ogp}
Suppose that $g\in M_{24}$ has prime power order $o(g)=p^k$, and let $m$ be a positive integer. Then we have
\begin{gather}\label{eqn:rhomg-ogp}
	\rjg_m(g)=(-1)^m\frac{1}{m}\sum_{d|n}\mu\left(\frac{n}{d}\right)
	\left(
	(-1)^{ds}a(g,ds) - (-1)^{\frac{ds}{p}}a\left(g^p,\frac{ds}{p}\right)
	\right)
\end{gather}
where $n$ is the largest divisor of $m$ that is coprime to $o(g)$, and $s:=\frac{m}{n}$.
\end{lem}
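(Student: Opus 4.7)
The plan is to derive (\ref{eqn:rhomg-ogp}) by applying the identity (\ref{eqn:rh:rhoj-2}) to well-chosen powers of $g$ and then Möbius-inverting over divisors of $n$. First, for each $d' \mid n$ I would apply (\ref{eqn:rh:rhoj-2}) to the element $h := g^{n/d'}$ at index $d's$; since $\gcd(n/d', p^k) = 1$, this $h$ still has order $p^k$, and $h^{d'/e} = g^{n/e}$ for every $e \mid d'$.

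The key bookkeeping is to partition the resulting divisor sum according to the $p$-adic valuation of $d's/\tilde d$. The \emph{Type B} sub-sum, over $\tilde d \mid d's/p$ (equivalently, $p \mid d's/\tilde d$), can be evaluated via $h^{d's/\tilde d} = (h^p)^{d's/(p\tilde d)}$ and a second application of (\ref{eqn:rh:rhoj-2}), this time to $h^p$ at index $d's/p$; it collapses to $(-1)^{d's/p} a(h^p, d's/p)$. The remaining \emph{Type A} sub-sum is indexed by $\tilde d = es$ with $e \mid d'$ (since $\gcd(e,s)=1$), so
\begin{gather*}
\sum_{e\mid d'}(-1)^{es} es\,\rjg_{es}(g^{n/e})
=(-1)^{d's} a(h, d's)-(-1)^{d's/p} a(h^p, d's/p).
\end{gather*}

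The delicate step is to replace $a(h,\cdot)$ and $a(h^p,\cdot)$ on the right-hand side with $a(g,\cdot)$ and $a(g^p,\cdot)$, so that the identity depends only on $d'$ through the elementary factors $(-1)^{d's}, (-1)^{d's/p}$. Since $a(u,M)$ depends on $u$ only through $\rjg_1(u) = \chi_2(u) - \chi_1(u)$, it suffices to show that $\chi_2(u^t) = \chi_2(u)$ whenever $\gcd(t, o(u)) = 1$. This follows from the fact that $\chi_2 + \chi_1$ is the permutation character of $M_{24}$ on $24$ points: $\Fix(u^t) = \Fix(\langle u^t\rangle) = \Fix(\langle u\rangle) = \Fix(u)$ as soon as $u^t$ generates $\langle u \rangle$. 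Applying this to $(u,t) = (g, n/d')$ and $(u,t) = (g^p, n/d')$ delivers the required reduction.

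With that in place, I obtain $U(d') = \sum_{e \mid d'} E(e)$ for every $d' \mid n$, where $U(d') := (-1)^{d's} a(g, d's)-(-1)^{d's/p} a(g^p, d's/p)$ and $E(e) := (-1)^{es} es\,\rjg_{es}(g^{n/e})$. Ordinary Möbius inversion on the divisor poset of $n$, followed by specialization to $d' = n$ (so that $E(n) = (-1)^m m\,\rjg_m(g)$), then produces exactly (\ref{eqn:rhomg-ogp}). I expect the main obstacle to be not the algebra itself but recognizing that the dependence on the intermediate elements $g^{n/d'}$ must and can be eliminated via the rationality of $\chi_2$; once that observation is in hand, the rest is a clean two-layer inversion (first in the $p$-adic valuation, then in the coprime-to-$p$ part).
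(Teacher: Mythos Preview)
Your proposal is correct and follows essentially the same two-layer strategy as the paper: split the divisor sum in (\ref{eqn:rh:rhoj-2}) by $p$-adic valuation, collapse the low-valuation part to $a(\cdot,m/p)$ via a second application of (\ref{eqn:rh:rhoj-2}), and then M\"obius-invert over the $p$-coprime part $n$. The one point on which you are more careful than the paper is the ``delicate step'': the paper passes from (\ref{eqn:rh:rhoj-2}) to (\ref{eqn:agm=sum_1}) by silently replacing $\rjg_{dp^v}(g^{n/d})$ with $\rjg_{dp^v}(g)$, which is precisely the invariance under coprime powers that you make explicit via the permutation-character description of $\chi_2$; your route (applying (\ref{eqn:rh:rhoj-2}) to $h=g^{n/d'}$ so that $E(e)$ manifestly depends only on $e$, and only afterward replacing $a(h,\cdot)$ by $a(g,\cdot)$) needs just the $\rjg_1$-level statement rather than the full $\rjg_j$-level one.
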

\begin{proof}
Let $k>0$ and suppose that $o(g)=p^k$.
Let $m>0$ and write 
$m=np^v$, where $v\geq 0$ and $(n,p)=1$. 
If $v=0$ then we require to check that 
(\ref{eqn:rhomg-ogp}) agrees with (\ref{eqn:rh:rhomg-cop}), which it does
according to our convention that $a(g,r)=0$ in case $r\notin\ZZ$. So assume that $v>0$.
Then we may write (\ref{eqn:rh:rhoj-2}) in the form 
\begin{gather}\label{eqn:agm=sum_1}
	(-1)^{np^v}
	a(g,np^v)
	=
	\sum_{d|n}
	(-1)^{dp^v}
	dp^v
	\rjg_{dp^v}(g)
+	\sum_{d|np^{v-1}}
	(-1)^{d}
	d
	\rjg_{d}(g^{\frac{np^v}{d}}).
\end{gather}
Observe that the second sum on the right-hand side of (\ref{eqn:agm=sum_1}) is $(-1)^{np^{v-1}}a(g^p,np^{v-1})$ 
according to (\ref{eqn:rh:rhoj-2}). Thus we have
\begin{gather}\label{eqn:agm=sum_2}
	(-1)^{np^v}
	a(g,np^v)
	-
	(-1)^{np^{v-1}}a(g^p,np^{v-1})
	=
	\sum_{d|n}
	(-1)^{dp^v}
	dp^v
	\rjg_{dp^v}(g).
\end{gather}
Applying M\"obius inversion to (\ref{eqn:agm=sum_2}) we obtain
\begin{gather}\label{eqn:ogp2}
	\rjg_{np^v}(g)=(-1)^{np^v}\frac{1}{np^v}\sum_{d|n}\mu\left(\frac{n}{d}\right)
	\left(
	(-1)^{dp^v}a(g,dp^v)
	-
	(-1)^{dp^{v-1}}a(g^p,dp^{v-1})
	\right),
\end{gather}
which is just what we required to show.
\end{proof}
\begin{lem}\label{lem:rhomg-ogp1p2}
Suppose that $g\in M_{24}$ has order divisible by just two primes $p_1$ and $p_2$, and let $m$ be a positive integer. Then we have
\begin{gather}
\begin{split}\label{eqn:rhomg-ogp1p2}
	\rjg_m(g)=(-1)^m\frac{1}{m}\sum_{d|n}
	&\;\mu\left(\frac{n}{d}\right)
	\Big(
	(-1)^{ds}a(g,ds) - (-1)^{dsp_1^{-1}}a(g^{p_1},dsp_1^{-1})\\
	&- (-1)^{dsp_2^{-1}}a(g^{p_2},dsp_2^{-1})
	+ (-1)^{dsp_1^{-1}p_2^{-1}}a(g^{p_1p_2},dsp_1^{-1}p_2^{-1})
	\Big)
\end{split}
\end{gather}
where $n$ is the largest divisor of $m$ that is coprime to $o(g)$, and $s:=mn^{-1}$.
\end{lem}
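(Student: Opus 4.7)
The plan is to mimic the proof of Lemma \ref{lem:rhomg-ogp}, replacing the single-prime $p$-adic bookkeeping with a two-prime inclusion-exclusion. Write $m = ns$ where $\gcd(n, p_1 p_2) = 1$ and $s = p_1^{v_1} p_2^{v_2}$ with $v_i = v_{p_i}(m)$. For each divisor $d$ of $n$, I would apply (\ref{eqn:rh:rhoj-2}) with ``$m$'' replaced by $ds$, obtaining
\begin{equation*}
(-1)^{ds} a(g, ds) = \sum_{e \mid ds} (-1)^e\, e\, \rjg_e(g^{ds/e}).
\end{equation*}

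Next I would partition the divisors $e$ of $ds$ according to the pair $(v_{p_1}(e), v_{p_2}(e))$. The ``full'' divisors, those with both $p$-adic valuations equal to $v_1$ and $v_2$, are exactly the $e = fs$ for $f \mid d$; for such $e$ the exponent $ds/e = d/f$ is coprime to $o(g)$, so rationality of $M_{24}$ (its irreducible character values are integers, so $g^k$ is conjugate to $g$ whenever $\gcd(k, o(g)) = 1$) lets me replace $\rjg_{fs}(g^{d/f})$ with $\rjg_{fs}(g)$. The ``non-full'' divisors satisfy $v_{p_1}(e) < v_1$ or $v_{p_2}(e) < v_2$; their indexing set is the union of divisors of $ds/p_1$ with divisors of $ds/p_2$, whose intersection is the set of divisors of $ds/(p_1 p_2)$. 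So by inclusion-exclusion the non-full sum splits into three pieces, and each piece is, by a further application of (\ref{eqn:rh:rhoj-2}) to $g^{p_1}$, $g^{p_2}$, and $g^{p_1 p_2}$ respectively, equal to $(-1)^{ds/p_i} a(g^{p_i}, ds/p_i)$; the convention $a(h, r) = 0$ for $r \notin \ZZ$ handles the boundary cases $v_1 = 0$ or $v_2 = 0$ automatically.

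Rearranging the resulting identity produces
\begin{equation*}
\sum_{f \mid d} (-1)^{fs}\, fs\, \rjg_{fs}(g) = H(d),
\end{equation*}
where $H(d)$ is precisely the bracketed expression in (\ref{eqn:rhomg-ogp1p2}). Viewing this as an identity valid for every divisor $d$ of $n$, I would apply M\"obius inversion over the divisor lattice of $n$ to extract the $f = n$ summand, giving $(-1)^{ns} ns\, \rjg_{ns}(g) = \sum_{d \mid n} \mu(n/d) H(d)$; dividing through by $(-1)^m m$ yields (\ref{eqn:rhomg-ogp1p2}).

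The main obstacle is the two-prime inclusion-exclusion bookkeeping: verifying that the ``non-full'' sum really equals the stated alternating combination of three instances of (\ref{eqn:rh:rhoj-2}) applied to powers of $g$, and tracking the signs correctly through the rearrangement. Once this is in place the M\"obius inversion step is routine and parallels the single-prime case. A secondary point, already implicit in the proof of Lemma \ref{lem:rhomg-ogp}, is the invocation of rationality of $M_{24}$ to identify $\rjg_d(g^k)$ with $\rjg_d(g)$ whenever $\gcd(k, o(g)) = 1$; this is what collapses the ``full'' summands to a form amenable to M\"obius inversion.
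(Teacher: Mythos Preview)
Your approach is essentially the same as the paper's: partition the divisors of $m$ according to their $p_1$- and $p_2$-adic valuations, identify the ``non-full'' contribution by inclusion--exclusion as an alternating sum of three further instances of (\ref{eqn:rh:rhoj-2}) applied to $g^{p_1}$, $g^{p_2}$, $g^{p_1p_2}$, and then M\"obius-invert over the part of $m$ coprime to $o(g)$. Your write-up is in fact slightly cleaner than the paper's in two respects: you handle the boundary cases $v_1=0$ or $v_2=0$ uniformly via the convention $a(h,r)=0$ for $r\notin\ZZ$ (the paper treats these separately), and you derive the pre-inversion identity explicitly for every $d\mid n$ rather than only for $d=n$.

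One point needs correcting. $M_{24}$ is \emph{not} a rational group: several of its irreducible characters take non-real values (see $\chi_3,\chi_5,\chi_{10}$ in Table~\ref{tab:ct}), and for instance an element of class $14A$ has a cube in class $14B$, so $g$ and $g^k$ need not be conjugate when $\gcd(k,o(g))=1$. What you actually need, and what does hold, is that the particular virtual character $\rjg_1'=\chi_2-\chi_1$ is integer-valued (it is the $24$-point permutation character minus twice the trivial), and hence so is every $\rjg_j$, being built from $\rjg_1'$ via the operations of Lemma~\ref{lem:repthyple}. Integer-valuedness of a virtual character already forces $\rjg_j(g^k)=\rjg_j(g)$ for $\gcd(k,o(g))=1$, by Galois invariance. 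The paper uses this implicitly without comment; your argument goes through once you replace the false rationality claim with this one.
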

\begin{proof}
The basic idea of the proof is the same as for Lemma \ref{lem:rhomg-ogp}. 
Namely, we apply (\ref{eqn:rh:rhoj-2}) to itself, and then apply M\"obius inversion, and use (\ref{eqn:rh:rhomg-cop}) to handle the edge cases where this procedure breaks down.

To implement this let $k_1,k_2>0$ and suppose that $o(g)=p_1^{k_1}p_2^{k_2}$.
Let $m>0$ and write $m=np_1^{v_1}p_2^{v_2}$ where $(n,p_1p_2)=1$.
If $v_1=v_2=0$ then, similar to the Proof of Lemma \ref{lem:rhomg-ogp}, the desired expression (\ref{eqn:rhomg-ogp1p2}) reduces to (\ref{eqn:rh:rhomg-cop}).

If $v_1>0$ and $v_2=0$ then (\ref{eqn:rhomg-ogp1p2}) reduces to the statement that
\begin{gather}
\label{eqn:rhomg-ogp1p2_v20}
	\rjg_{np_1^{v_1}}(g)=(-1)^{np_1^{v_1}}\frac{1}{np_1^{v_1}}\sum_{d|n}\mu\left(\frac{n}{d}\right)\Big(
	(-1)^{dp_1^{v_1}}a(g,dp_1^{v_1}) - (-1)^{dp_1^{v_1-1}}a(g^{p_1},dp_1^{v_1-1})
	\Big).
\end{gather}
To show this we 
rewrite (\ref{eqn:rh:rhoj-2}) in the form
\begin{gather}\label{rjg-ogp1p2_v20_sum2}
	(-1)^{np_1^{v_1}}a(g,np_1^{v_1})
	=\sum_{d|n}
	(-1)^{dp_1^{v_1}}dp_1^{v_1}
	\rjg_{dp_1^{v_1}}(g)
	+\sum_{d|np_1^{v_1-1}}
	(-1)^{d}d\rjg_{d}(g^{\frac{np_1^{v_1}}{d}}),
\end{gather}
and apply (\ref{eqn:rh:rhoj-2}) again in order to identify the second term on the right-hand side of 
(\ref{rjg-ogp1p2_v20_sum2}) with $(-1)^{np_1^{v_1-1}}a(g^{p_1},np_1^{v_1-1})$. This allows us to rewrite (\ref{rjg-ogp1p2_v20_sum2}) as
\begin{gather}\label{rjg-ogp1p2_v20_sum2_1}
	(-1)^{np_1^{v_1}}a(g,np_1^{v_1}) - 
	(-1)^{np_1^{v_1-1}}a(g^{p_1},np_1^{v_1-1})
	=\sum_{d|n}
	(-1)^{dp_1^{v_1}}dp_1^{v_1}
	\rjg_{dp_1^{v_1}}(g).
\end{gather}
We obtain (\ref{eqn:rhomg-ogp1p2_v20}) from (\ref{rjg-ogp1p2_v20_sum2_1}) by M\"obius inversion.

The case that $v_1=0$ and $v_2>0$ is the same so assume now that $v_1,v_2>0$. 
Then we may write (\ref{eqn:rh:rhoj-2}) in the form
\begin{gather}
\begin{split}\label{eqn:rh:lem2_a}
	(-1)^{np_1^{v_1}p_2^{v_2}}a(g,np_1^{v_1}p_2^{v_2})
	=&\;\sum_{d|n}
	(-1)^{dp_1^{v_1}p_2^{v_2}}dp_1^{v_1}p_2^{v_2}
	\rjg_{dp_1^{v_1}p_2^{v_2}}(g)
	\\&
	+\sum_{d|np_1^{v_1-1}}
	(-1)^{dp_2^{v_2}}dp_2^{v_2}\rjg_{dp_2^{v_2}}(g^{\frac{np_1^{v_1}}{d}})
	\\
	&
	+\sum_{d|np_2^{v_2-1}}
	(-1)^{dp_1^{v_1}}dp_1^{v_1}\rjg_{dp_1^{v_1}}(g^{\frac{np_2^{v_2}}{d}})
	\\&
	+\sum_{d|np_1^{v_1-1}p_2^{v_2-1}}(-1)^dd\rjg_d(g^{\frac{np_1^{v_1}p_2^{v_2}}{d}}).
\end{split}
\end{gather}
For the second sum on the right-hand side of (\ref{eqn:rh:lem2_a}) we note that
\begin{gather}
\Div(np_1^{v_1-1}p_2^{v_2}) = \Div(np_1^{v_1-1})p_2^{v_2} \sqcup \Div(np_1^{v_1-1}p_2^{v_2-1}).
\end{gather}
Applying this to (\ref{eqn:rh:rhoj-2}) we get
\begin{gather}\label{eqn:rh:lem2_a2}
\begin{split}
	(-1)^{np_1^{v_1-1}p_2^{v_2}}a(g^{p_1},np_1^{v_1-1}p_2^{v_2})
	=&
	\sum_{d|np_1^{v_1-1}}(-1)^{dp_2^{v_2}}dp_2^{v_2}\rjg_{dp_2^{v_2}}(g^{\frac{np_1^{v_1}}{d}})
	\\
	&+\sum_{d|np_1^{v_1-1}p_2^{v_2-1}}(-1)^dd\rjg_d(g^{\frac{np_1^{v_1}p_2^{v_2}}{d}}).
\end{split}
\end{gather}
Symmetrically we have
\begin{gather}\label{eqn:rh:lem2_a3}
\begin{split}
	(-1)^{np_1^{v_1}p_2^{v_2-1}}a(g^{p_2},np_1^{v_1}p_2^{v_2-1})
	=&
	\sum_{d|np_2^{v_2-1}}(-1)^{dp_1^{v_1}}dp_1^{v_1}\rjg_{dp_1^{v_1}}(g^{\frac{np_2^{v_2}}{d}})
	\\
	&+\sum_{d|np_1^{v_1-1}p_2^{v_2-1}}(-1)^dd\rjg_d(g^{\frac{np_1^{v_1}p_2^{v_2}}{d}}),
\end{split}
\end{gather}
while for the last sum in each of (\ref{eqn:rh:lem2_a}) and (\ref{eqn:rh:lem2_a2}-\ref{eqn:rh:lem2_a3}) we have
\begin{gather}\label{eqn:rh:lem2_a4}
	(-1)^{np_1^{v_1-1}p_2^{v_2-1}}a(g^{p_1p_2},np_1^{v_1-1}p_2^{v_2-1})
	=\sum_{d|np_1^{v_1-1}p_2^{v_2-1}}(-1)^dd\rjg_d(g^{\frac{np_1^{v_1}p_2^{v_2}}{d}}).
\end{gather}

Substituting (\ref{eqn:rh:lem2_a2}-\ref{eqn:rh:lem2_a4}) into (\ref{eqn:rh:lem2_a}) we obtain
\begin{gather}
\begin{split}\label{eqn:rh:lem2_aa}
	&(-1)^{np_1^{v_1}p_2^{v_2}}a(g,np_1^{v_1}p_2^{v_2})
	-(-1)^{np_1^{v_1-1}p_2^{v_2}}a(g^{p_1},np_1^{v_1-1}p_2^{v_2})\\
	&-(-1)^{np_1^{v_1}p_2^{v_2-1}}a(g^{p_2},np_1^{v_1}p_2^{v_2-1})
	+(-1)^{np_1^{v_1-1}p_2^{v_2-1}}a(g^{p_1p_2},np_1^{v_1-1}p_2^{v_2-1})\\
	&=\;\sum_{d|n}
	(-1)^{dp_1^{v_1}p_2^{v_2}}dp_1^{v_1}p_2^{v_2}
	\rjg_{dp_1^{v_1}p_2^{v_2}}(g),
\end{split}
\end{gather}
and the desired result (\ref{eqn:rhomg-ogp1p2}) now follows by M\"obius inversion.
\end{proof}
\begin{rmk}
It is clear that Lemmas \ref{lem:rhomg-ogp} and \ref{lem:rhomg-ogp1p2} are special cases of a more general result, that handles $g$ with order divisible by more than 2 primes. 
We do not need such a result here since there is no such element in $M_{24}$, but you could imagine carrying out a similar analysis with a more general group. 
With this in mind we formulate the identity 
\begin{gather}\label{eqn:rhomg-gen}
	\rjg_m(g)=(-1)^m\frac{1}{m}\sum_{d|n}\mu\left(\frac{n}{d}\right)\left(
	\sum_{P\subset \Pi(s)}(-1)^{\# P+ds\pi(P)^{-1}}
	a(g^{\pi(P)},ds\pi(P)^{-1})
	\right)
\end{gather}
which is applicable to general orders. In (\ref{eqn:rhomg-gen}) we write $\Pi(n)$ for the set of prime divisors of an integer $n$, and, given a set $S$ of primes, write $\pi(S)$ as a short hand for the product $\prod_{p\in S}p$. We also take $n$ to be the largest divisor of $m$ that is coprime to $o(g)$, and $s:=mn^{-1}$, as before.
\end{rmk}

Now that we have concrete expressions for $\rjg_m(g)$ in terms of the $a(g,m)$ we wish to use these to derive bounds on the $\rjg_m(g)$. For this we use the following.
\begin{lem}\label{lem:rh:boundagm}
Let $g\in M_{24}$ and let $m$ be a positive integer.
\begin{itemize}
\item
If $g$ is of class $1A$ then $22^m(\frac{481^m}{482^m}+\frac{1}{483^m})<a(g,m)< 22^m(\frac{482^m}{483^m}+\frac{1}{482^m})$.
\item
If $g$ is of class $2A$ then $6^m(\frac{33^m}{34^m}+\frac{1}{35^m})<a(g,m)< 6^m(\frac{34^m}{35^m}+\frac{1}{34^m})$.
\item
If $g$ is of class $3A$ then $4^m(\frac{13^m}{14^m}+\frac{1}{15^m})<a(g,m)< 4^m(\frac{14^m}{15^m}+\frac{1}{14^m})$.
\item
If $g$ is not of class $1A$, $2A$ or $3A$ then $|a(g,m)|\leq 2$.
\end{itemize}
\end{lem}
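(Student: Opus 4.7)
The starting point is that $\alpha(g)$ and $\alpha(g)^{-1}$ are, by construction, the two roots of the quadratic $x^2 - \rjg_1(g)x + 1$, so their product is $1$ and their sum is $\rjg_1(g)$. I would first read off from Table \ref{tab:ct} the values of $\rjg_1(g) = \chi_2(g) - \chi_1(g)$. Since $\chi_1 + \chi_2$ is the natural $24$-dimensional permutation character of $M_{24}$, one has $\rjg_1(g) = \#\Fix(g) - 2$, and a glance at the cycle shapes shows that $\rjg_1$ takes the values $22, 6, 4$ precisely on the classes $1A, 2A, 3A$, and satisfies $|\rjg_1(g)| \leq 2$ on every other class.

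For the fourth bullet the discriminant $\rjg_1(g)^2 - 4$ is nonpositive, so $\alpha(g)$ and $\alpha(g)^{-1}$ are either complex conjugates of modulus one, or both equal to $\pm 1$. The triangle inequality then gives $|a(g,m)| = |\alpha(g)^m + \alpha(g)^{-m}| \leq 2$ at once.

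For the three remaining classes $\rjg_1(g) > 2$, and the two roots are the positive real numbers $\tfrac12 \rjg_1(g) \pm \tfrac12\sqrt{\rjg_1(g)^2 - 4}$. My plan is to establish, for each class, rational two-sided bounds of the form $A^- < \alpha(g) < A^+$ and $B^- < \alpha(g)^{-1} < B^+$ with all quantities positive, chosen so that $(A^{\pm})^m + (B^{\pm})^m$ reproduce the stated expressions after factoring out $C_g^m$ with $C_g \in \{22, 6, 4\}$. Monotonicity of $t \mapsto t^m$ on $\RRp$ then yields $(A^-)^m + (B^-)^m < \alpha(g)^m + \alpha(g)^{-m} < (A^+)^m + (B^+)^m$, which is exactly the asserted inequality.

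Each of the six one-sided inequalities to be verified reduces, after clearing denominators and squaring, to a comparison of two explicit positive integers. For instance, the $1A$ upper bound $\alpha(g) < 22\cdot 482/483$ becomes $\sqrt{120} < 5291/483$, i.e.\ $120 \cdot 483^2 < 5291^2$, which is the integer comparison $27994680 < 27994681$; the matching lower bound $\alpha(g)^{-1} > 22/483$ reduces to the same comparison. The $2A$ and $3A$ cases are analogous, involving only $\sqrt{2}$ and $\sqrt{3}$ and similarly small integer checks. The main obstacle is merely the extreme tightness of the $1A$ bounds, where the margin is one part in $10^7$; this rules out any appeal to decimal approximation of $\sqrt{120}$, so I would perform every comparison over a common integer denominator as above.
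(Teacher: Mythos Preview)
Your proposal is correct and follows essentially the same route as the paper: bound $\alpha(g)$ and $\alpha(g)^{-1}$ by explicit rationals, raise to the $m$-th power, and handle the remaining classes via $|\rjg_1(g)|\leq 2$ so that $\alpha(g)$ lies on the unit circle. The only cosmetic difference is that the paper packages your six case-by-case integer checks into the single uniform inequality $x\tfrac{x^2-3}{x^2-2}<\tfrac{x}{2}+\tfrac12\sqrt{x^2-4}<x\tfrac{x^2-2}{x^2-1}$ for $x>2$, and then specializes to $x=22,6,4$; the resulting bounds are identical to yours.
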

\begin{proof}
By direct calculation we find that
\begin{gather}\label{eqn:boundagm-1}
	x\frac{x^2-3}{x^2-2}<\frac{x}{2}+\frac{\sqrt{x^2-4}}{2}  < x\frac{x^2-2}{x^2-1}
\end{gather}
for $x>2$. Taking $x=\rjg_1(g)$ in (\ref{eqn:boundagm-1}) for $g$ of class $1A$, $2A$ and $3A$ we obtain
\begin{gather}\label{eqn:boundagm-2}
	22\frac{481}{482}<\alpha(1A)<22\frac{482}{483},\quad
	6\frac{33}{34}<\alpha(2A)<6\frac{34}{35},\quad
	4\frac{13}{14}<\alpha(3A)<4\frac{14}{15}.	
\end{gather}
By similar methods, or by applying $\alpha(g)^{-1}= \rjg_1(g)-\alpha(g)$ to (\ref{eqn:boundagm-2}), we obtain
\begin{gather}\label{eqn:boundagm-3}
	\frac{22}{483}<\alpha(1A)^{-1}<\frac{22}{482},\quad
	\frac{6}{35}<\alpha(2A)^{-1}<\frac{6}{34},\quad
	\frac{4}{15}<\alpha(3A)^{-1}<\frac{4}{14}.	
\end{gather}
The first three claims follow from (\ref{eqn:boundagm-2}-\ref{eqn:boundagm-3}). 
For the final claim we note that
the $g\in M_{24}$ that are not of class $1A$, $2A$ or $3A$ are exactly those for which we have $\rjg_1(g)\in\{0,\pm 1,\pm 2\}$.  
These, in turn, are exactly the cases that $\alpha(g)$ is a root of unity.
It follows that the absolute value of $a(g,m)=\alpha(g)^m+\alpha(g)^{-m}$ is at most $2$.
\end{proof}

\begin{lem}\label{lem:rh:rhom1A-lb}
For $m>3$ we have
\begin{gather}\label{eqn:rh:rhom1A-lb}
	\rjg_m(e) > \frac1m 22^m\left(\frac{481^m}{482^m}+\frac{1}{483^m}\right) - 22^{\frac m 2}.
\end{gather}
\end{lem}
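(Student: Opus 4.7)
The plan is to apply formula (\ref{eqn:rh:rhomg-cop}) with $g = e$. Since the identity has order $1$, every positive integer $m$ is coprime to $o(e)$, so the hypothesis of (\ref{eqn:rh:rhomg-cop}) is trivially satisfied and we obtain
\begin{gather}
\rjg_m(e) = (-1)^m \frac{1}{m} \sum_{d \mid m} \mu\!\left(\frac{m}{d}\right)(-1)^d a(e,d).
\end{gather}
The strategy is to isolate the $d = m$ contribution as the main term and bound the remaining divisors as an error.

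First I would extract the $d = m$ term: since $\mu(1) = 1$ and the sign $(-1)^m \cdot (-1)^m$ cancels, this term contributes exactly $\frac{1}{m} a(e,m)$. Applying the first case of Lemma \ref{lem:rh:boundagm} (which covers $g = e$, of class $1A$, where $\rjg_1(e) = 22$) gives
\begin{gather}
\frac{1}{m} a(e,m) > \frac{1}{m} 22^m\!\left(\frac{481^m}{482^m} + \frac{1}{483^m}\right),
\end{gather}
which matches the leading term in (\ref{eqn:rh:rhom1A-lb}) exactly. It therefore suffices to show that the contribution from the proper divisors is at most $22^{m/2}$ in absolute value.

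For this, apply the triangle inequality and the upper bound from the first case of Lemma \ref{lem:rh:boundagm}: each $a(e,d)$ satisfies $a(e,d) < 22^d(\tfrac{482^d}{483^d} + \tfrac{1}{482^d}) < \tfrac{483}{482} \cdot 22^d$ for $d \geq 1$. Since every proper divisor of $m$ is at most $m/2$, the sum is dominated by a geometric series:
\begin{gather}
\sum_{\substack{d \mid m \\ d < m}} a(e,d) \;<\; \frac{483}{482} \sum_{d=1}^{\lfloor m/2 \rfloor} 22^d \;<\; \frac{483}{482} \cdot \frac{22}{21} \cdot 22^{m/2}.
\end{gather}
Dividing by $m$ and using $m \geq 4$ makes the right-hand side comfortably less than $22^{m/2}$. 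Combining this error estimate with the lower bound on the main term yields (\ref{eqn:rh:rhom1A-lb}).

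There is no real obstacle here: the main term grows like $22^m/m$ while the worst-case error from proper divisors grows only like $22^{m/2}$, leaving enormous slack. The only minor care needed is in handling the edge cases (e.g.\ $m = 4$, where the proper divisors are $\{1, 2\}$), but these can be absorbed into the geometric-series bound above.
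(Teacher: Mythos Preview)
Your proof is correct and follows essentially the same approach as the paper: both isolate the $d=m$ term via (\ref{eqn:rh:rhomg-cop}), bound it below using the first case of Lemma~\ref{lem:rh:boundagm}, and then control the sum over proper divisors using the upper bound from the same lemma. The only difference is cosmetic: the paper bounds the tail by (number of divisors)$\times|a(e,m/2)|\leq 2m^{1/2}\cdot\frac1m\cdot 22^{m/2}(\cdots)$, whereas you dominate it by the full geometric series $\sum_{d\leq m/2}22^d$; both estimates land comfortably below $22^{m/2}$ for $m\geq 4$.
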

\begin{proof}
Using (\ref{eqn:rh:rhomg-cop}) with $g=e$ we write
\begin{gather}\label{eqn:rh:rhom1A-lb-1}
	\rjg_m(e) = \frac1ma(e,m) + (-1)^m\frac1m\sum_{\substack{d|m\\d<m}}\mu\left(\frac{m}d\right)(-1)^da(e,d),
\end{gather}
the idea being that the first term on the right-hand side of (\ref{eqn:rh:rhom1A-lb-1}) dominates the growth of $\rjg_m(e)$. 
Observe 
that any upper bound for $|a(e,\frac{m}{2})|$ bounds every summand within the summation on the right-hand side of (\ref{eqn:rh:rhom1A-lb-1}), and there are not more than $2m^{\frac12}$ terms in that summation, because there are not more than $2m^{\frac12}$ divisors of $m$. Using Lemma \ref{lem:rh:boundagm} to bound $a(e,m)$ from below, and $a(e,\frac{m}2)$ from above, we thus obtain
\begin{gather}
	\rjg_m(e) > \frac1m 22^m\left(\frac{481^m}{482^m}+\frac{1}{483^m}\right) - 2m^{\frac12}\frac1m 22^{\frac m2}\left(\frac{482^{\frac m2}}{483^{\frac m2}}+\frac{1}{482^{\frac m2}}\right).
\end{gather}
We obtain the simpler expression (\ref{eqn:rh:rhom1A-lb}) by observing that both $2m^{-\frac12}$ and $\frac{482^{\frac m2}}{483^{\frac m2}}+\frac{1}{482^{\frac m2}}$ are bounded above by $1$ for $m\geq 4$.
\end{proof}

\begin{lem}\label{lem:rh:rhomg-ub}
For $g\in M_{24}$ with $g\neq e$ and for $m>11$ we have
\begin{gather}\label{eqn:rh:rhomg-ub}
	|\rjg_m(g)| < 
	\frac1m 6^m 
	+ \frac1m 22^{\frac{m}2} 
	+6^{\frac{m}3} + 22^{\frac{m}{6}}.
\end{gather}
\end{lem}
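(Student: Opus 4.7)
The plan is to proceed by case analysis on the conjugacy class of $g$, bounding the expressions for $\rjg_m(g)$ furnished by Lemma \ref{lem:rhomg-ogp} (when $o(g)$ is a prime power) and Lemma \ref{lem:rhomg-ogp1p2} (when $o(g)$ is divisible by exactly two primes). Since no element of $M_{24}$ has order divisible by three distinct primes, these two lemmas cover every non-identity case. Taking absolute values and applying the triangle inequality (using $|\mu|\leq 1$) produces a bound of the common shape
\[
|\rjg_m(g)| \;\leq\; \frac{1}{m}\sum_{d\mid n}\sum_{P}\bigl|a\bigl(g^{\pi(P)},\,ds/\pi(P)\bigr)\bigr|,
\]
where $n$ is the largest divisor of $m$ coprime to $o(g)$, $s=m/n$, and $P$ ranges over subsets of the set of prime divisors of $o(g)$ (with the convention $\pi(\emptyset):=1$).

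Next, I would bound each $|a(g^{\pi(P)},r)|$ via Lemma \ref{lem:rh:boundagm}: the bound is essentially $22^r$ when $g^{\pi(P)}\in 1A$, essentially $6^r$ when $\in 2A$, essentially $4^r$ when $\in 3A$, and at most $2$ otherwise. Inspection of the character table (Table \ref{tab:ct}) shows that heavy contributions appear only for a handful of classes, the two extremal being $g\in 2A$ (where $g^2=e\in 1A$) and $g\in 6A$ or $6B$ (where one has $g^2\in 3A$ and $g^3\in 2A$). The four summands on the right-hand side of \eqref{eqn:rh:rhomg-ub} are assembled from the principal $d=n$ contributions: $\frac{1}{m}6^m$ and $\frac{1}{m}22^{m/2}$ arise from $|a(g,m)|$ and $|a(e,m/2)|$ for $g\in 2A$, while $6^{m/3}$ and $22^{m/6}$ absorb $|a(g^3,m/3)|$ and $|a(e,m/6)|$ for $g\in 6A$ or $6B$ (after dropping a factor of $1/m$, as explained below). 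Intermediate contributions such as $\frac{1}{m}4^{m/2}$ from $|a(g^2,m/2)|$ when $g^2\in 3A$, or $\frac{1}{m}6^{m/2}$ from $|a(g^2,m/2)|$ when $o(g)=4$ and $g^2\in 2A$, are absorbed into $\frac{1}{m}22^{m/2}$ since $(k/22)^{m/2}\to 0$ for $k\in\{4,6\}$; contributions from classes of prime order $\geq 5$ are trivial because all relevant $|a(g^k,r)|$ are bounded by $2$.

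The main obstacle, and the step that dictates the precise form of the un-normalized last two terms in \eqref{eqn:rh:rhomg-ub}, is the treatment of the off-diagonal $d<n$ summands. For any such $d$ one has $ds\leq m/p_{\min}$, where $p_{\min}\geq 3$ is the smallest prime divisor of $n/d$ (since $n$ is coprime to $o(g)$, and in the extremal scenarios this forces $n$ to be coprime to $2$, whence its smallest prime factor is at least $3$). Each off-diagonal term is thus bounded by an expression with exponent at most $m/3$ in the base $6$ or at most $m/6$ in the base $22$, and using $\sigma_0(n)\leq 2\sqrt{n}\leq 2\sqrt{m}$ to count summands, their cumulative contribution is at most $\frac{2\sqrt{m}}{m}\bigl(6^{m/3}+22^{m/6}+\text{smaller}\bigr)$. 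For $m>11$ this is strictly less than $6^{m/3}+22^{m/6}$, absorbing the prefactor $\frac{2\sqrt{m}}{m}<1$ into the bound and explaining the loss of the $1/m$ normalization on the last two terms. A direct numerical verification at $m=12$, combined with monotonicity of the relevant exponential ratios, then closes the estimate for every $m>11$ and every non-identity $g\in M_{24}$.
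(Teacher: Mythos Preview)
Your proposal is correct and follows essentially the same route as the paper's proof: case analysis by conjugacy class via Lemmas \ref{lem:rhomg-ogp} and \ref{lem:rhomg-ogp1p2}, triangle inequality with $|\mu|\leq 1$, bounding each $|a(\cdot,\cdot)|$ by Lemma \ref{lem:rh:boundagm}, separating the $d=n$ principal term from the $d<n$ tail, and controlling the tail using the divisor bound $\sigma_0(n)\leq 2\sqrt{n}$ together with the fact that the smallest prime factor of $n$ is at least $3$ (or $5$ for $o(g)=6$). One minor imprecision: for $g\in 6B$ the powers $g^2\in 3B$ and $g^3\in 2B$ both satisfy $|\rjg_1|\leq 2$, so every $|a(g^k,r)|\leq 2$ and this class is trivial rather than extremal; the genuine source of the $6^{m/3}$ and $22^{m/6}$ terms is the $2A$ off-diagonal sum (and, with an extra $1/m$, the $6A$ principal terms), exactly as the paper shows.
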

\begin{proof}
We prove this conjugacy class by conjugacy class, applying Lemma \ref{lem:rhomg-ogp} in the case that $g$ has prime-power order, and applying Lemma \ref{lem:rhomg-ogp1p2} otherwise.
Since the arguments are very similar in each case, we treat $[g]={\rm 2A}$ and $[g]={\rm 6A}$ in detail here, and leave the remaining cases as exercises for the reader.

In the case that $[g]={\rm 2A}$ we begin by applying (\ref{eqn:rhomg-ogp}) with $g$ of order $2$. 
In that notation we have $s=2^v$ and $m=2^vn$, for some positive $v$, if $m$ is even, and $s=1$ and $n=m$ if $m$ is odd. We obtain
\begin{gather}\label{eqn:rh:rhomg-ub-1}
\begin{split}
\rjg_m(g) =& \frac{1}{m}a(g,m) - (-1)^{\frac m2}\frac 1ma\left(e,\frac{m}2\right)\\
&+(-1)^m\frac1m\sum_{\substack{d|n\\d<n}}\mu\left(\frac{n}{d}\right)
	\left(
	(-1)^{ds}a(g,ds) - (-1)^{\frac{ds}{2}}a\left(e,\frac{ds}{2}\right)
	\right),
\end{split}
\end{gather}
where $a(g,\frac m2)$ and $a(e,\frac{ds}{2})$ are omitted (i.e.\ regarded as zero) in the case that $m$ is odd.
Observe now that an upper bound for $|a(g,\frac{m}{3})|$ is an upper bound for every term $a(g,ds)$ in (\ref{eqn:rh:rhomg-ub-1}), since the smallest non-trivial divisor of $n$ is at least $3$ since $n$ is odd.
Observe also that an upper bound for $|a(e,\frac{m}{6})|$ is an upper bound for every term $a(e,\frac{ds}{2})$ in (\ref{eqn:rh:rhomg-ub-1}) for the same reason.
Applying Lemma \ref{lem:rh:boundagm}, and using the fact that
there are no more than $2{m}^{\frac12}$ divisors of $m$ we obtain 
\begin{gather}\label{eqn:rh:rhomg-ub-2}
\begin{split}
	|\rjg_m(g)| < &
	\frac1m 6^m \left(\frac{34^m}{35^m}+\frac{1}{34^m}\right) 
	+ \frac1m 22^{\frac{m}2} \left(\frac{482^{\frac{m}{2}}}{483^{\frac{m}{2}}}+\frac{1}{482^{\frac{m}{2}}}\right)\\
	&+2m^{-\frac12}6^{\frac{m}3}\left(\frac{34^{\frac m3}}{35^{\frac m3}}+\frac{1}{34^{\frac m3}}\right)  + 2m^{-\frac12}22^{\frac{m}{6}}\left(\frac{482^{\frac{m}{6}}}{483^{\frac{m}{6}}}+\frac{1}{482^{\frac{m}{6}}}\right),
\end{split}
\end{gather}
and we get the desired expression (\ref{eqn:rh:rhomg-ub}) by observing that $2m^{-\frac12}$ and 
the parenthetical terms in the right-hand side of (\ref{eqn:rh:rhomg-ub-2}) 
are all bounded above by $1$ for $m\geq 12$.

Now suppose that $[g]={\rm 6A}$. 
Let $m>0$ and write $m=ns$ with $(n,6)=1$. Then (\ref{eqn:rhomg-ogp1p2}) specializes to
\begin{gather}
\begin{split}\label{eqn:lemupo6}
	\rjg_m(g)=&\;
	\frac1{m}
	a(g,m)
	-	
	(-1)^{\frac{m}{2}}
	\frac1{m}
	a\left(g^2,\frac{m}{2}\right)
	-
	\frac1{m}
	a\left(g^3,\frac{m}{3}\right)
	+	
	(-1)^{\frac{m}{6}}
	\frac1{m}
	a\left(e,\frac{m}{6}\right)
	\\
	&+(-1)^m\frac{1}{m}\sum_{\substack{d|n\\d<n}}
	\mu\left(\frac{n}{d}\right)
	\left(
	(-1)^{ds}a(g,ds) - (-1)^{\frac{ds}{2}}a\left(g^{2},\frac{ds}{2}\right)\right.\\
	&- \left.(-1)^{\frac{ds}{3}}a\left(g^{3},\frac{ds}{3}\right)
	+ (-1)^{\frac{ds}{6}}a\left(e,\frac{ds}{6}\right)
	\right).
\end{split}
\end{gather}
In this case a smallest non-trivial divisor of $n$ is $5$, so 
$|a(g^2,\frac{m}{10})|$ bounds every $a(g^2,\frac{ds}{2})$ in (\ref{eqn:lemupo6}), and $|a(g^3,\frac{m}{15})|$ bounds every $a(g^3,\frac{ds}{3})$,
and $|a(e,\frac{m}{30})|$ bounds every $a(e,\frac{ds}{6})$.
Note that $g^2\in [3A]$ and $g^3\in [2A]$.
Again applying Lemma \ref{lem:rh:boundagm}, and again using the fact that there are no more than $2m^{\frac12}$ divisors of $m$ we obtain 
\begin{gather}
\begin{split}
|\rjg_m(g)|<&\;4m^{-\frac12}
+\frac1m 4^{\frac{m}{2}} \left(\frac{14^{\frac{m}{2}}}{15^{\frac{m}{2}}}+\frac{1}{14^{\frac{m}{2}}}\right)
+\frac1m 6^{\frac{m}{3}} \left(\frac{34^{\frac m3}}{35^{\frac{m}{3}}}+\frac{1}{34^{\frac{m}{3}}}\right) 
\\
&+\frac1m 22^{\frac{m}6} \left(\frac{482^{\frac{m}{6}}}{483^{\frac{m}{6}}}+\frac{1}{482^{\frac{m}{6}}}\right)\\
&
+2m^{-\frac12} 4^{\frac{m}{10}} \left(\frac{14^{\frac{m}{10}}}{15^{\frac{m}{10}}}+\frac{1}{14^{\frac{m}{10}}}\right)
+2m^{-\frac12} 6^{\frac{m}{15}} \left(\frac{34^{\frac m{15}}}{35^{\frac{m}{15}}}+\frac{1}{34^{\frac{m}{15}}}\right) 
\\
&+2m^{-\frac12} 22^{\frac{m}{30}} \left(\frac{482^{\frac{m}{30}}}{483^{\frac{m}{30}}}+\frac{1}{482^{\frac{m}{30}}}\right).
\end{split}
\end{gather}
As before we may simplify this to 
\begin{gather}\label{eqn:rjgmgo6last}
|\rjg_m(g)|<2+\frac1m 4^{\frac{m}{2}}+\frac1m 6^{\frac{m}{3}}+\frac1m 22^{\frac{m}6}+4^{\frac{m}{10}} +6^{\frac{m}{15}} + 22^{\frac{m}{30}}.
\end{gather}
It is now straightforward to check that the right-hand side of (\ref{eqn:rjgmgo6last}) is bounded above by the right-hand side of (\ref{eqn:rh:rhomg-ub}) for any positive integer $m$.
\end{proof}

\begin{sidewaystable}

\section{Characters}\label{sec:ct}

\begin{center}
\begin{footnotesize}
\caption{Character table of $M_{24}$}\label{tab:ct}
\smallskip
\begin{tabular}{c@{ }|c|@{ }r@{ }r@{ }r@{ }r@{ }r@{ }r@{ }r@{ }r@{ }r@{ }r@{ }r@{ }r@{ }r@{ }r@{ }r@{ }r@{ }r@{ }r@{ }r@{ }r@{ }r@{ }r@{ }r@{ }r@{ }r@{ }r} \toprule
$[g]$	&{FS}&1A	&2A	&2B	&3A	&3B	&4A	&4B	&4C	&5A	&6A	&6B	&7A	&7B	&8A	&10A	&11A	&12A	&12B	&14A	&14B	&15A	&15B	&21A	&21B	&23A	&23B	\\
	\midrule
$[g^{2}]$	&&1A	&1A	&1A	&3A	&3B	&2A	&2A	&2B	&5A	&3A	&3B	&7A	&7B	&4B	&5A	&11A	&6A	&6B	&7A	&7B	&15A	&15B	&21A	&21B	&23A	&23B	\\
$[g^{3}]$	&&1A	&2A	&2B	&1A	&1A	&4A	&4B	&4C	&5A	&2A	&2B	&7B	&7A	&8A	&10A	&11A	&4A	&4C	&14B	&14A	&5A	&5A	&7B	&7A	&23A	&23B	\\
$[g^{5}]$	&&1A	&2A	&2B	&3A	&3B	&4A	&4B	&4C	&1A	&6A	&6B	&7B	&7A	&8A	&2B	&11A	&12A	&12B	&14B	&14A	&3A	&3A	&21B	&21A	&23B	&23A	\\
$[g^{7}]$	&&1A	&2A	&2B	&3A	&3B	&4A	&4B	&4C	&5A	&6A	&6B	&1A	&1A	&8A	&10A	&11A	&12A	&12B	&2A	&2A	&15B	&15A	&3B	&3B	&23B	&23A	\\
$[g^{11}]$	&&1A	&2A	&2B	&3A	&3B	&4A	&4B	&4C	&5A	&6A	&6B	&7A	&7B	&8A	&10A	&1A	&12A	&12B	&14A	&14B	&15B	&15A	&21A	&21B	&23B	&23A	\\
$[g^{23}]$	&&1A	&2A	&2B	&3A	&3B	&4A	&4B	&4C	&5A	&6A	&6B	&7A	&7B	&8A	&10A	&11A	&12A	&12B	&14A	&14B	&15A	&15B	&21A	&21B	&1A	&1A	\\
	\midrule
$\chi_{1}$	&$+$&$1$	&$1$	&$1$	&$1$	&$1$	&$1$	&$1$	&$1$	&$1$	&$1$	&$1$	&$1$	&$1$	&$1$	&$1$	&$1$	&$1$	&$1$	&$1$	&$1$	&$1$	&$1$	&$1$	&$1$	&$1$	&$1$	\\
$\chi_{2}$	&$+$&$23$	&$7$	&$-1$	&$5$	&$-1$	&$-1$	&$3$	&$-1$	&$3$	&$1$	&$-1$	&$2$	&$2$	&$1$	&$-1$	&$1$	&$-1$	&$-1$	&$0$	&$0$	&$0$	&$0$	&$-1$	&$-1$	&$0$	&$0$	\\
$\chi_{3}$	&$\circ$&$45$	&$-3$	&$5$	&$0$	&$3$	&$-3$	&$1$	&$1$	&$0$	&$0$	&$-1$	&$b_7$	&$\overline{b_7}$	&$-1$	&$0$	&$1$	&$0$	&$1$	&$-b_7$	&$-\overline{b_7}$	&$0$	&$0$	&$b_7$	&$\overline{b_7}$	&$-1$	&$-1$	\\
$\chi_{4}$	&$\circ$&$45$	&$-3$	&$5$	&$0$	&$3$	&$-3$	&$1$	&$1$	&$0$	&$0$	&$-1$	&$\overline{b_7}$	&$b_7$	&$-1$	&$0$	&$1$	&$0$	&$1$	&$-\overline{b_7}$	&$-b_7$	&$0$	&$0$	&$\overline{b_7}$	&$b_7$	&$-1$	&$-1$	\\
$\chi_{5}$	&$\circ$&$231$	&$7$	&$-9$	&$-3$	&$0$	&$-1$	&$-1$	&$3$	&$1$	&$1$	&$0$	&$0$	&$0$	&$-1$	&$1$	&$0$	&$-1$	&$0$	&$0$	&$0$	&$b_{15}$	&$\overline{b_{15}}$	&$0$	&$0$	&$1$	&$1$	\\
$\chi_{6}$	&$\circ$&$231$	&$7$	&$-9$	&$-3$	&$0$	&$-1$	&$-1$	&$3$	&$1$	&$1$	&$0$	&$0$	&$0$	&$-1$	&$1$	&$0$	&$-1$	&$0$	&$0$	&$0$	&$\overline{b_{15}}$	&$b_{15}$	&$0$	&$0$	&$1$	&$1$	\\
$\chi_{7}$	&$+$&$252$	&$28$	&$12$	&$9$	&$0$	&$4$	&$4$	&$0$	&$2$	&$1$	&$0$	&$0$	&$0$	&$0$	&$2$	&$-1$	&$1$	&$0$	&$0$	&$0$	&$-1$	&$-1$	&$0$	&$0$	&$-1$	&$-1$	\\
$\chi_{8}$	&$+$&$253$	&$13$	&$-11$	&$10$	&$1$	&$-3$	&$1$	&$1$	&$3$	&$-2$	&$1$	&$1$	&$1$	&$-1$	&$-1$	&$0$	&$0$	&$1$	&$-1$	&$-1$	&$0$	&$0$	&$1$	&$1$	&$0$	&$0$	\\
$\chi_{9}$	&$+$&$483$	&$35$	&$3$	&$6$	&$0$	&$3$	&$3$	&$3$	&$-2$	&$2$	&$0$	&$0$	&$0$	&$-1$	&$-2$	&$-1$	&$0$	&$0$	&$0$	&$0$	&$1$	&$1$	&$0$	&$0$	&$0$	&$0$	\\
$\chi_{10}$&$\circ$&$770$	&$-14$	&$10$	&$5$	&$-7$	&$2$	&$-2$	&$-2$	&$0$	&$1$	&$1$	&$0$	&$0$	&$0$	&$0$	&$0$	&$-1$	&$1$	&$0$	&$0$	&$0$	&$0$	&$0$	&$0$	&$b_{23}$	&$\overline{b_{23}}$	\\
$\chi_{11}$&$\circ$&$770$	&$-14$	&$10$	&$5$	&$-7$	&$2$	&$-2$	&$-2$	&$0$	&$1$	&$1$	&$0$	&$0$	&$0$	&$0$	&$0$	&$-1$	&$1$	&$0$	&$0$	&$0$	&$0$	&$0$	&$0$	&$\overline{b_{23}}$	&$b_{23}$	\\
$\chi_{12}$&$\circ$&$990$	&$-18$	&$-10$	&$0$	&$3$	&$6$	&$2$	&$-2$	&$0$	&$0$	&$-1$	&$b_7$	&$\overline{b_7}$	&$0$	&$0$	&$0$	&$0$	&$1$	&$b_7$	&$\overline{b_7}$	&$0$	&$0$	&$b_7$	&$\overline{b_7}$	&$1$	&$1$	\\
$\chi_{13}$&$\circ$&$990$	&$-18$	&$-10$	&$0$	&$3$	&$6$	&$2$	&$-2$	&$0$	&$0$	&$-1$	&$\overline{b_7}$	&$b_7$	&$0$	&$0$	&$0$	&$0$	&$1$	&$\overline{b_7}$	&$b_7$	&$0$	&$0$	&$\overline{b_7} $	&$b_7$	&$1$	&$1$	\\
$\chi_{14}$&$+$&$1035$	&$27$	&$35$	&$0$	&$6$	&$3$	&$-1$	&$3$	&$0$	&$0$	&$2$	&$-1$	&$-1$	&$1$	&$0$	&$1$	&$0$	&$0$	&$-1$	&$-1$	&$0$	&$0$	&$-1$	&$-1$	&$0$	&$0$	\\
$\chi_{15}$&$\circ$&$1035$	&$-21$	&$-5$	&$0$	&$-3$	&$3$	&$3$	&$-1$	&$0$	&$0$	&$1$	&$2b_7$	&$2\overline{b_7}$	&$-1$	&$0$	&$1$	&$0$	&$-1$	&$0$	&$0$	&$0$	&$0$	&$-b_7$	&$-\overline{b_7} $	&$0$	&$0$	\\
$\chi_{16}$&$\circ$&$1035$	&$-21$	&$-5$	&$0$	&$-3$	&$3$	&$3$	&$-1$	&$0$	&$0$	&$1$	&$2\overline{b_7}$	&$2b_7$	&$-1$	&$0$	&$1$	&$0$	&$-1$	&$0$	&$0$	&$0$	&$0$	&$-\overline{b_7}$	&$-b_7 $	&$0$	&$0$	\\
$\chi_{17}$&$+$&$1265$	&$49$	&$-15$	&$5$	&$8$	&$-7$	&$1$	&$-3$	&$0$	&$1$	&$0$	&$-2$	&$-2$	&$1$	&$0$	&$0$	&$-1$	&$0$	&$0$	&$0$	&$0$	&$0$	&$1$	&$1$	&$0$	&$0$	\\
$\chi_{18}$&$+$&$1771$	&$-21$	&$11$	&$16$	&$7$	&$3$	&$-5$	&$-1$	&$1$	&$0$	&$-1$	&$0$	&$0$	&$-1$	&$1$	&$0$	&$0$	&$-1$	&$0$	&$0$	&$1$	&$1$	&$0$	&$0$	&$0$	&$0$	\\
$\chi_{19}$&$+$&$2024$	&$8$	&$24$	&$-1$	&$8$	&$8$	&$0$	&$0$	&$-1$	&$-1$	&$0$	&$1$	&$1$	&$0$	&$-1$	&$0$	&$-1$	&$0$	&$1$	&$1$	&$-1$	&$-1$	&$1$	&$1$	&$0$	&$0$	\\
$\chi_{20}$&$+$&$2277$	&$21$	&$-19$	&$0$	&$6$	&$-3$	&$1$	&$-3$	&$-3$	&$0$	&$2$	&$2$	&$2$	&$-1$	&$1$	&$0$	&$0$	&$0$	&$0$	&$0$	&$0$	&$0$	&$-1$	&$-1$	&$0$	&$0$	\\
$\chi_{21}$&$+$&$3312$	&$48$	&$16$	&$0$	&$-6$	&$0$	&$0$	&$0$	&$-3$	&$0$	&$-2$	&$1$	&$1$	&$0$	&$1$	&$1$	&$0$	&$0$	&$-1$	&$-1$	&$0$	&$0$	&$1$	&$1$	&$0$	&$0$	\\
$\chi_{22}$&$+$&$3520$	&$64$	&$0$	&$10$	&$-8$	&$0$	&$0$	&$0$	&$0$	&$-2$	&$0$	&$-1$	&$-1$	&$0$	&$0$	&$0$	&$0$	&$0$	&$1$	&$1$	&$0$	&$0$	&$-1$	&$-1$	&$1$	&$1$	\\
$\chi_{23}$&$+$&$5313$	&$49$	&$9$	&$-15$	&$0$	&$1$	&$-3$	&$-3$	&$3$	&$1$	&$0$	&$0$	&$0$	&$-1$	&$-1$	&$0$	&$1$	&$0$	&$0$	&$0$	&$0$	&$0$	&$0$	&$0$	&$0$	&$0$	\\
$\chi_{24}$&$+$&$5544$	&$-56$	&$24$	&$9$	&$0$	&$-8$	&$0$	&$0$	&$-1$	&$1$	&$0$	&$0$	&$0$	&$0$	&$-1$	&$0$	&$1$	&$0$	&$0$	&$0$	&$-1$	&$-1$	&$0$	&$0$	&$1$	&$1$	\\
$\chi_{25}$&$+$&$5796$	&$-28$	&$36$	&$-9$	&$0$	&$-4$	&$4$	&$0$	&$1$	&$-1$	&$0$	&$0$	&$0$	&$0$	&$1$	&$-1$	&$-1$	&$0$	&$0$	&$0$	&$1$	&$1$	&$0$	&$0$	&$0$	&$0$	\\
$\chi_{26}$&$+$&$10395$	&$-21$	&$-45$	&$0$	&$0$	&$3$	&$-1$	&$3$	&$0$	&$0$	&$0$	&$0$	&$0$	&$1$	&$0$	&$0$	&$0$	&$0$	&$0$	&$0$	&$0$	&$0$	&$0$	&$0$	&$-1$	&$-1$	\\\bottomrule
\end{tabular}
\end{footnotesize}
\end{center}
\end{sidewaystable}

\begin{table}[ht]
\centering
\begin{small}
\caption{{Data for Proposition \ref{pro:va:phig}}\label{tab:da}}
\begin{tabular}{c|c|c|c}
\toprule
\text{Class} & \text{Cycle Structure} & $C_{-g}$ & $D_{g}$ \\
\midrule
1A & $1^{24}$ & $4096$ & $0$ \\
2A & $1^8 \cdot 2^8$ & $0$ & $0$\\
2B & $2^{12}$ & $0$ & $0$ \\
3A & $1^6\cdot 3^6$ & $64$ & $0$ \\
3B & $3^8$ & $16$ & $0$ \\
4A & $2^4\cdot 4^4$ & $0$ &$0$ \\
4B & $1^4\cdot 2^2\cdot 4^4$ &$0$ & $0$\\
4C & $4^6$ &$0$ &$0$ \\
5A & $1^4\cdot 5^4$ & $16$ &$0$ \\
6A & $1^2\cdot 2^2\cdot 3^2\cdot 6^2$ & $0$& $0$ \\
6B & $6^4$ & $0$&$0$ \\
7A & $1^3\cdot 7^3$ & $8$ &$0$ \\
7B & $1^3\cdot 7^3$ & $8$& $0$\\
8A & $1^2\cdot 2\cdot 4\cdot 8^2$ &$0$ &$0$ \\
10A & $2^2\cdot 10^2$ &$0$ &$0$ \\
11A & $1^2\cdot 11^2$ & $4$& $0$\\
12A & $2 \cdot 4 \cdot 6 \cdot 12$ & $0$& $0$\\
12B & $12^2$ & $0$& $-12 i$\\
14A & $1 \cdot 2 \cdot 7 \cdot 14$ & $0$&$0$ \\
14B & $1 \cdot 2 \cdot 7 \cdot 14$ & $0$&$0$ \\
15A & $1 \cdot 3 \cdot 5 \cdot 15$ & $4$& $0$\\
15B & $1 \cdot 3 \cdot 5 \cdot 15$ & $4$& $0$\\
21A & $3 \cdot 21$ & $2$&$3\sqrt{7} i$ \\
21B & $3 \cdot 21$ & $2$& $3\sqrt{7} i$\\
23A & $1 \cdot 23$ & $2 $& $\sqrt{23}  i$\\
23B & $1 \cdot 23$ & $2$& $\sqrt{23}  i$\\
\bottomrule
\end{tabular}
\end{small}
\end{table} 

\clearpage

\section{Multiplicities}\label{sec:mlt}

\vfill

\hfill
\begin{minipage}[t]{0.27\textwidth}
	\centering
\begin{footnotesize}
\begin{tabular}{c|c}
	\toprule
	$j$& $\rho_j(\chi_1)$ \\
	\midrule
	$1$ & $-1$ \\
	$2$ & $0$ \\
	$3$ & $0$ \\
	$4$ & $0$ \\
	$5$ & $0$ \\
	$6$ & $1$ \\
	$7$ & $4$ \\
	$8$ & $32$ \\
	$9$ & $588$ \\
	$10$ & $10984$ \\
	$11$ & $213361$ \\
	$12$ & $4272898$ \\
	$13$ & $86530367$ \\
	$14$ & $1763550556$ \\
	$15$& $36133233594$ \\
	$16$& $743689742272$ \\
	$17$& $15366803399428$ \\
	$18$& $318626547565247$ \\
	$19$& $6627096180118217$ \\
\bottomrule
\end{tabular}
\end{footnotesize}
\end{minipage}
\hfill
\begin{minipage}[t]{0.27\textwidth}
\centering
\begin{footnotesize}
\begin{tabular}{c|c}
	\toprule
	$j$& $\rho_j(\chi_2)$ \\
\midrule
	$1$ & $1$ \\
		$2$ & $0$ \\
		$3$ & $0$ \\
		$4$ & $0$ \\
		$5$ & $1$ \\
		$6$ & $7$ \\
		$7$ & $50$ \\
		$8$ & $700$ \\
		$9$ & $12718$ \\
		$10$ & $246230$ \\
		$11$ & $4886508$ \\
		$12$ & $98209502$ \\
		$13$ & $1989650854$ \\
		$14$ & $40558083580$ \\
		$15$ & $831048880350$ \\
		$16$& $17104793197688$ \\
	$17$& $353436020602096$ \\
	$18$& $7328407831026159$ \\
	$19$& $152423198327490650$ \\
\bottomrule
\end{tabular}
\end{footnotesize}
\end{minipage}
\hfill
\begin{minipage}[t]{0.27\textwidth}
\centering
\begin{footnotesize}
\begin{tabular}{c|c}
	\toprule 	$j$& $\rho_j(\chi_3)$ \\
		\midrule	$1$ & $0$ \\
		$2$ & $0$ \\
		$3$ & $0$ \\
		$4$ & $0$ \\
		$5$ & $0$ \\
		$6$ & $3$ \\
		$7$ & $60$ \\
		$8$ & $1202$ \\
		$9$ & $24073$ \\
		$10$ & $477804$ \\
		$11$ & $9540338$ \\
		$12$ & $192043022$ \\
		$13$ & $3892241220$ \\
		$14$ & $79349833252$ \\
		$15$ & $1625949221980$ \\
			$16$& $33465812442916$ \\
			$17$& $691504782811080$ \\
			$18$& $14338186634603811$ \\
			$19$& $298219286691924780$ \\
		\bottomrule
\end{tabular}
\end{footnotesize}
\end{minipage}

\vfill

\begin{minipage}[t]{0.27\textwidth}
	\centering
	\begin{footnotesize}
	\begin{tabular}{c|c}
	\toprule
	$j$& $\rho_j(\chi_4)$ \\
	\midrule
	$1$ & $0$ \\
		$2$ & $0$ \\
		$3$ & $0$ \\
		$4$ & $0$ \\
		$5$ & $0$ \\
		$6$ & $3$ \\
		$7$ & $60$ \\
		$8$ & $1202$ \\
		$9$ & $24073$ \\
		$10$ & $477804$ \\
		$11$ & $9540338$ \\
		$12$ & $192043022$ \\
		$13$ & $3892241220$ \\
		$14$ & $79349833252$ \\
		$15$ & $1625949221980$ \\
		$16$& $33465812442916$ \\
			$17$& $691504782811080$ \\
			$18$& $14338186634603811$ \\
			$19$& $298219286691924780$ \\
	\bottomrule
\end{tabular}
\end{footnotesize}
\end{minipage}
\hfill
\begin{minipage}[t]{0.27\textwidth}
	\centering
	\begin{footnotesize}
	\begin{tabular}{c|c}
	\toprule
	$j$& $\rho_j(\chi_5)$ \\
\midrule
	$1$ & $0$ \\
		$2$ & $0$ \\
		$3$ & $0$ \\
		$4$ & $0$ \\
		$5$ & $1$ \\
		$6$ & $17$ \\
		$7$ & $338$ \\
		$8$ & $6432$ \\
		$9$ & $124468$ \\
		$10$ & $2455518$ \\
		$11$ & $48998211$ \\
		$12$ & $985985635$ \\
		$13$ & $19980883491$ \\
		$14$ & $407332398340$ \\
		$15$ & $8346560410940$ \\
		$16$& $171791297467968$ \\
			$17$& $3549725184263524$ \\
			$18$& $73602694645974787$ \\
			$19$& $1530859024258691429$ \\
	\bottomrule
\end{tabular}
\end{footnotesize}
\end{minipage}
\hfill
\begin{minipage}[t]{0.27\textwidth}
	\centering
	\begin{footnotesize}
	\begin{tabular}{c|c}
	\toprule 	$j$& $\rho_j(\chi_6)$ \\
		\midrule
		$1$ & $0$ \\
		$2$ & $0$ \\
		$3$ & $0$ \\
		$4$ & $0$ \\
		$5$ & $1$ \\
		$6$ & $17$ \\
		$7$ & $338$ \\
		$8$ & $6432$ \\
		$9$ & $124468$ \\
		$10$ & $2455518$ \\
		$11$ & $48998211$ \\
		$12$ & $985985635$ \\
		$13$ & $19980883491$ \\
		$14$ & $407332398340$ \\
		$15$ & $8346560410940$ \\
		$16$& $171791297467968$ \\
			$17$& $3549725184263524$ \\
			$18$& $73602694645974787$ \\
			$19$& $1530859024258691429$ \\
		\bottomrule
\end{tabular}
\end{footnotesize}
\end{minipage}

\vfill

\newpage

\vfill

\begin{minipage}[t]{0.27\textwidth}
	\centering
	\begin{footnotesize}
	\begin{tabular}{c|c}
	\toprule 	$j$& $\rho_j(\chi_7)$ \\
		\midrule	$1$ & $0$ \\
		$2$ & $1$ \\
		$3$ & $0$ \\
		$4$ & $0$ \\
		$5$ & $4$ \\
		$6$ & $37$ \\
		$7$ & $416$ \\
		$8$ & $7110$ \\
		$9$ & $136764$ \\
		$10$ & $2685594$ \\
		$11$ & $53477314$ \\
		$12$ & $1075713141$ \\
		$13$ & $21798003732$ \\
		$14$ & $444366896180$ \\
		$15$ & $9105358123214$ \\
		$16$& $187408781333964$ \\
	$17$& $3872428053060448$ \\
	$18$& $80293852135947487$ \\
	$19$& $1670028044008883948$ \\
	\bottomrule
\end{tabular}
\end{footnotesize}
\end{minipage}
\hfill
\begin{minipage}[t]{0.27\textwidth}
\centering
\begin{footnotesize}
\begin{tabular}{c|c}
	\toprule 	$j$& $\rho_j(\chi_8)$ \\
		\midrule	$1$ & $0$ \\
		$2$ & $0$ \\
		$3$ & $0$ \\
		$4$ & $1$ \\
		$5$ & $3$ \\
		$6$ & $25$ \\
		$7$ & $396$ \\
		$8$ & $7142$ \\
		$9$ & $136714$ \\
		$10$ & $2691192$ \\
		$11$ & $53672840$ \\
		$12$ & $1079927072$ \\
		$13$ & $21884021444$ \\
		$14$ & $446126991808$ \\
		$15$ & $9141476263728$ \\
		$16$& $188152401661324$ \\
	$17$& $3887794404057256$ \\
	$18$& $80612475938478939$ \\
	$19$& $1676655126438353900$ \\
	\bottomrule
\end{tabular}
\end{footnotesize}
\end{minipage}
\hfill
\begin{minipage}[t]{0.27\textwidth}
\centering
\begin{footnotesize}
\begin{tabular}{c|c}
	\toprule 	$j$& $\rho_j(\chi_9)$ \\
		\midrule	$1$ & $0$ \\
		$2$ & $0$ \\
		$3$ & $0$ \\
		$4$ & $0$ \\
		$5$ & $5$ \\
		$6$ & $54$ \\
		$7$ & $754$ \\
		$8$ & $13542$ \\
		$9$ & $261232$ \\
		$10$ & $5141116$ \\
		$11$ & $102475525$ \\
		$12$ & $2061698776$ \\
		$13$ & $41778887223$ \\
		$14$ & $851699294520$ \\
		$15$ & $17451918534212$ \\
			$16$& $359200078801932$ \\
			$17$& $7422153237323972$ \\
			$18$& $153896546781922274$ \\
			$19$& $3200887068267575377$ \\
		\bottomrule
\end{tabular}
\end{footnotesize}
\end{minipage}

\vfill

\begin{minipage}[t]{0.27\textwidth}
	\centering
	\begin{footnotesize}
	\begin{tabular}{c|c}
	\toprule 	$j$& $\rho_j(\chi_{10})$ \\
		\midrule	$1$ & $0$ \\
		$2$ & $0$ \\
		$3$ & $0$ \\
		$4$ & $0$ \\
		$5$ & $3$ \\
		$6$ & $58$ \\
		$7$ & $1090$ \\
		$8$ & $21114$ \\
		$9$ & $413615$ \\
		$10$ & $8179718$ \\
		$11$ & $163288225$ \\
		$12$ & $3286372442$ \\
		$13$ & $66601777885$ \\
		$14$ & $1357768898140$ \\
		$15$ & $27821833170578$ \\
		$16$& $572637455308212$ \\
			$17$& $11832416229590740$ \\
			$18$& $245342309919677533$ \\
			$19$& $5102863382164608875$ \\
		\bottomrule
\end{tabular}
\end{footnotesize}
\end{minipage}
\hfill
\begin{minipage}[t]{0.27\textwidth}
\centering
\begin{footnotesize}
\begin{tabular}{c|c}
	\toprule 	$j$& $\rho_j(\chi_{11})$ \\
		\midrule	$1$ & $0$ \\
		$2$ & $0$ \\
		$3$ & $0$ \\
		$4$ & $0$ \\
		$5$ & $3$ \\
		$6$ & $58$ \\
		$7$ & $1090$ \\
		$8$ & $21114$ \\
		$9$ & $413615$ \\
		$10$ & $8179718$ \\
		$11$ & $163288225$ \\
		$12$ & $3286372442$ \\
		$13$ & $66601777885$ \\
		$14$ & $1357768898140$ \\
		$15$ & $27821833170578$ \\
		$16$& $572637455308212$ \\
	$17$& $11832416229590740$ \\
	$18$& $245342309919677533$ \\
	$19$& $5102863382164608875$ \\
\bottomrule
\end{tabular}
\end{footnotesize}
\end{minipage}
\hfill
\begin{minipage}[t]{0.27\textwidth}
	\centering
	\begin{footnotesize}
	\begin{tabular}{c|c}
	\toprule 	$j$& $\rho_j(\chi_{12})$ \\
		\midrule	$1$ & $0$ \\
		$2$ & $0$ \\
		$3$ & $0$ \\
		$4$ & $0$ \\
		$5$ & $3$ \\
		$6$ & $67$ \\
		$7$ & $1393$ \\
		$8$ & $27200$ \\
		$9$ & $531678$ \\
		$10$ & $10514976$ \\
		$11$ & $209940945$ \\
		$12$ & $4225359670$ \\
		$13$ & $85630844805$ \\
		$14$ & $1745702303864$ \\
		$15$ & $35770928205528$ \\
		$16$& $736248166334080$ \\
	$17$& $15213106579051620$ \\
	$18$& $315440112551329871$ \\
	$19$& $6560824348474289955$ \\
\bottomrule
\end{tabular}
\end{footnotesize}
\end{minipage}

\vfill

\newpage

\vfill

\begin{minipage}[t]{0.27\textwidth}
	\centering
	\begin{footnotesize}
	\begin{tabular}{c|c}
	\toprule 	$j$& $\rho_j(\chi_{13})$ \\
		\midrule	$1$ & $0$ \\
		$2$ & $0$ \\
		$3$ & $0$ \\
		$4$ & $0$ \\
		$5$ & $3$ \\
		$6$ & $67$ \\
		$7$ & $1393$ \\
		$8$ & $27200$ \\
		$9$ & $531678$ \\
		$10$ & $10514976$ \\
		$11$ & $209940945$ \\
		$12$ & $4225359670$ \\
		$13$ & $85630844805$ \\
		$14$ & $1745702303864$ \\
		$15$ & $35770928205528$ \\
			$16$& $736248166334080$ \\
			$17$& $15213106579051620$ \\
			$18$& $315440112551329871$ \\
			$19$& $6560824348474289955$ \\
	\bottomrule
\end{tabular}
\end{footnotesize}
\end{minipage}
\hfill
\begin{minipage}[t]{0.27\textwidth}
	\centering
	\begin{footnotesize}
	\begin{tabular}{c|c}
	\toprule 	$j$& $\rho_j(\chi_{14})$ \\
		\midrule	$1$ & $0$ \\
		$2$ & $0$ \\
		$3$ & $0$ \\
		$4$ & $0$ \\
		$5$ & $6$ \\
		$6$ & $96$ \\
		$7$ & $1526$ \\
		$8$ & $28562$ \\
		$9$ & $557676$ \\
		$10$ & $11006652$ \\
		$11$ & $219534788$ \\
		$12$ & $4417619232$ \\
		$13$ & $89524623990$ \\
		$14$ & $1825061766808$ \\
		$15$ & $37396922706972$ \\
			$16$& $769714201094116$ \\
			$17$& $15904612719070560$ \\
			$18$& $329778307146655584$ \\
			$19$& $6859043676418159530$ \\
	\bottomrule
\end{tabular}
\end{footnotesize}
\end{minipage}
\hfill
\begin{minipage}[t]{0.27\textwidth}
	\centering
	\begin{footnotesize}
	\begin{tabular}{c|c}
	\toprule 	$j$& $\rho_j(\chi_{15})$ \\
		\midrule	$1$ & $0$ \\
		$2$ & $0$ \\
		$3$ & $0$ \\
		$4$ & $0$ \\
		$5$ & $3$ \\
		$6$ & $71$ \\
		$7$ & $1453$ \\
		$8$ & $28402$ \\
		$9$ & $555772$ \\
		$10$ & $10992780$ \\
		$11$ & $219481283$ \\
		$12$ & $4417403042$ \\
		$13$ & $89523086025$ \\
		$14$ & $1825052137116$ \\
		$15$ & $37396877433580$ \\
			$16$& $769713978776996$ \\
			$17$& $15904611361862700$ \\
			$18$& $329778299186044709$ \\
			$19$& $6859043635166214735$ \\
	\bottomrule
\end{tabular}
\end{footnotesize}
\end{minipage}

\vfill

\begin{minipage}[t]{0.27\textwidth}
	\centering
	\begin{footnotesize}
	\begin{tabular}{c|c}
	\toprule 	$j$& $\rho_j(\chi_{16})$ \\
		\midrule	$1$ & $0$ \\
		$2$ & $0$ \\
		$3$ & $0$ \\
		$4$ & $0$ \\
		$5$ & $3$ \\
		$6$ & $71$ \\
		$7$ & $1453$ \\
		$8$ & $28402$ \\
		$9$ & $555772$ \\
		$10$ & $10992780$ \\
		$11$ & $219481283$ \\
		$12$ & $4417403042$ \\
		$13$ & $89523086025$ \\
		$14$ & $1825052137116$ \\
		$15$ & $37396877433580$ \\
			$16$& $769713978776996$ \\
			$17$& $15904611361862700$ \\
			$18$& $329778299186044709$ \\
			$19$& $6859043635166214735$ \\
	\bottomrule
\end{tabular}
\end{footnotesize}
\end{minipage}
\hfill
\begin{minipage}[t]{0.27\textwidth}
	\centering
	\begin{footnotesize}
	\begin{tabular}{c|c}
	\toprule 	$j$& $\rho_j(\chi_{17})$ \\
		\midrule	$1$ & $0$ \\
		$2$ & $0$ \\
		$3$ & $0$ \\
		$4$ & $1$ \\
		$5$ & $9$ \\
		$6$ & $112$ \\
		$7$ & $1896$ \\
		$8$ & $35252$ \\
		$9$ & $682310$ \\
		$10$ & $13452722$ \\
		$11$ & $268338955$ \\
		$12$ & $5399473660$ \\
		$13$ & $109419507235$ \\
		$14$ & $2230632665932$ \\
		$15$ & $45707365183430$ \\
			$16$& $940761907087336$ \\
			$17$& $19438971554928340$ \\
			$18$& $403062377450532250$ \\
			$19$& $8383275618279671045$ \\
	\bottomrule
\end{tabular}
\end{footnotesize}
\end{minipage}
\hfill
\begin{minipage}[t]{0.27\textwidth}
	\centering
	\begin{footnotesize}
	\begin{tabular}{c|c}
	\toprule 	$j$& $\rho_j(\chi_{18})$ \\
		\midrule	$1$ & $0$ \\
		$2$ & $0$ \\
		$3$ & $0$ \\
		$4$ & $1$ \\
		$5$ & $8$ \\
		$6$ & $133$ \\
		$7$ & $2530$ \\
		$8$ & $48702$ \\
		$9$ & $951772$ \\
		$10$ & $18815382$ \\
		$11$ & $375576143$ \\
		$12$ & $7558734652$ \\
		$13$ & $153184456727$ \\
		$14$ & $3122870255064$ \\
		$15$ & $63990226945984$ \\
			$16$& $1317066208822332$ \\
			$17$& $27214557646036068$ \\
			$18$& $564287314494151039$ \\
			$19$& $11736585788620199217$ \\
	\bottomrule
\end{tabular}
\end{footnotesize}
\end{minipage}

\vfill

\newpage 

\vfill

\begin{minipage}[t]{0.27\textwidth}
	\centering
	\begin{footnotesize}
	\begin{tabular}{c|c}
	\toprule 	$j$& $\rho_j(\chi_{19})$ \\
		\midrule	$1$ & $0$ \\
		$2$ & $0$ \\
		$3$ & $0$ \\
		$4$ & $0$ \\
		$5$ & $9$ \\
		$6$ & $162$ \\
		$7$ & $2914$ \\
		$8$ & $55730$ \\
		$9$ & $1088766$ \\
		$10$ & $21510644$ \\
		$11$ & $429262372$ \\
		$12$ & $8638706004$ \\
		$13$ & $175068938428$ \\
		$14$ & $3569000520108$ \\
		$15$ & $73131717678906$ \\
			$16$& $1505218677685924$ \\
			$17$& $31102352494722752$ \\
			$18$& $644899793150420208$ \\
			$19$& $13413240928712331268$ \\
		\bottomrule
\end{tabular}
\end{footnotesize}
\end{minipage}
\hfill
\begin{minipage}[t]{0.27\textwidth}
	\centering
	\begin{footnotesize}
	\begin{tabular}{c|c}
	\toprule 	$j$& $\rho_j(\chi_{20})$ \\
		\midrule	$1$ & $0$ \\
		$2$ & $0$ \\
		$3$ & $0$ \\
		$4$ & $1$ \\
		$5$ & $11$ \\
		$6$ & $176$ \\
		$7$ & $3298$ \\
		$8$ & $62954$ \\
		$9$ & $1225364$ \\
		$10$ & $24199272$ \\
		$11$ & $482933730$ \\
		$12$ & $9718667200$ \\
		$13$ & $196952942406$ \\
		$14$ & $4015126719456$ \\
		$15$ & $82273193736660$ \\
			$16$& $1693371092666644$ \\
			$17$& $34990146896188944$ \\
			$18$& $725512268805550800$ \\
			$19$& $15089896055118395130$ \\
		\bottomrule
\end{tabular}
\end{footnotesize}
\end{minipage}
\hfill
\begin{minipage}[t]{0.27\textwidth}
	\centering
	\begin{footnotesize}
	\begin{tabular}{c|c}
	\toprule 	$j$& $\rho_j(\chi_{21})$ \\
		\midrule	$1$ & $0$ \\
		$2$ & $0$ \\
		$3$ & $0$ \\
		$4$ & $1$ \\
		$5$ & $17$ \\
		$6$ & $276$ \\
		$7$ & $4824$ \\
		$8$ & $91516$ \\
		$9$ & $1783082$ \\
		$10$ & $35205924$ \\
		$11$ & $702468518$ \\
		$12$ & $14136287112$ \\
		$13$ & $286477566396$ \\
		$14$ & $5840188486264$ \\
		$15$ & $119670116455776$ \\
			$16$& $2463085293760760$ \\
			$17$& $50894759615259504$ \\
			$18$& $1055290575952428732$ \\
			$19$& $21948939731536554660$ \\
		\bottomrule
\end{tabular}
\end{footnotesize}
\end{minipage}

\vfill

\begin{minipage}[t]{0.27\textwidth}
	\centering
	\begin{footnotesize}
	\begin{tabular}{c|c}
	\toprule 	$j$& $\rho_j(\chi_{22})$ \\
		\midrule	$1$ & $0$ \\
		$2$ & $0$ \\
		$3$ & $1$ \\
		$4$ & $2$ \\
		$5$ & $20$ \\
		$6$ & $298$ \\
		$7$ & $5160$ \\
		$8$ & $97456$ \\
		$9$ & $1895723$ \\
		$10$ & $37419312$ \\
		$11$ & $746601020$ \\
		$12$ & $15024171162$ \\
		$13$ & $304469346620$ \\
		$14$ & $6206965644824$ \\
		$15$ & $127185643497524$ \\
			$16$& $2617771882979168$ \\
			$17$& $54091049236505680$ \\
			$18$& $1121564865256303860$ \\
			$19$& $23327375571282983780$ \\
		\bottomrule
\end{tabular}
\end{footnotesize}
\end{minipage}
\hfill
\begin{minipage}[t]{0.27\textwidth}
	\centering
	\begin{footnotesize}
	\begin{tabular}{c|c}
	\toprule 	$j$& $\rho_j(\chi_{23})$ \\
		\midrule	$1$ & $0$ \\
		$2$ & $0$ \\
		$3$ & $0$ \\
		$4$ & $1$ \\
		$5$ & $23$ \\
		$6$ & $420$ \\
		$7$ & $7676$ \\
		$8$ & $146628$ \\
		$9$ & $2859014$ \\
		$10$ & $56467762$ \\
		$11$ & $1126842900$ \\
		$12$ & $22676817738$ \\
		$13$ & $459556836504$ \\
		$14$ & $9368630130332$ \\
		$15$ & $191970785048182$ \\
			$16$& $3951199191568392$ \\
			$17$& $81643676086789096$ \\
			$18$& $1692861960988474434$ \\
			$19$& $35209757461889105240$ \\
		\bottomrule
\end{tabular}
\end{footnotesize}
\end{minipage}
\hfill
\begin{minipage}[t]{0.27\textwidth}
	\centering
	\begin{footnotesize}
	\begin{tabular}{c|c}
	\toprule 	$j$& $\rho_j(\chi_{24})$ \\
		\midrule	$1$ & $0$ \\
		$2$ & $0$ \\
		$3$ & $0$ \\
		$4$ & $1$ \\
		$5$ & $21$ \\
		$6$ & $409$ \\
		$7$ & $7880$ \\
		$8$ & $152370$ \\
		$9$ & $2979352$ \\
		$10$ & $58899976$ \\
		$11$ & $1175720712$ \\
		$12$ & $23662170501$ \\
		$13$ & $479534183808$ \\
		$14$ & $9775942922756$ \\
		$15$ & $200317240431118$ \\
			$16$& $4122989920977444$ \\
			$17$& $85193398112007472$ \\
			$18$& $1766464638092399079$ \\
			$19$& $36740616389990128928$ \\
		\bottomrule
\end{tabular}
\end{footnotesize}
\end{minipage}

\vfill

\newpage

\vfill

\begin{minipage}[t]{0.27\textwidth}
	\centering
	\begin{footnotesize}
	\begin{tabular}{c|c}
	\toprule 	$j$& $\rho_j(\chi_{25})$ \\
		\midrule	$1$ & $0$ \\
		$2$ & $0$ \\
		$3$ & $0$ \\
		$4$ & $0$ \\
		$5$ & $21$ \\
		$6$ & $437$ \\
		$7$ & $8260$ \\
		$8$ & $159366$ \\
		$9$ & $3115758$ \\
		$10$ & $61584248$ \\
		$11$ & $1229193580$ \\
		$12$ & $24737868955$ \\
		$13$ & $501332135142$ \\
		$14$ & $10220309637244$ \\
		$15$ & $209422597930386$ \\
			$16$& $4310398700098764$ \\
			$17$& $89065826157294728$ \\
			$18$& $1846758490201103001$ \\
			$19$& $38410644433902142762$ \\
		\bottomrule
\end{tabular}
\end{footnotesize}
\end{minipage}
\hspace{1em}
\begin{minipage}[t]{0.27\textwidth}
	\centering
	\begin{footnotesize}
	\begin{tabular}{c|c}
	\toprule 	$j$& $\rho_j(\chi_{26})$ \\
		\midrule	$1$ & $0$ \\
		$2$ & $0$ \\
		$3$ & $0$ \\
		$4$ & $3$ \\
		$5$ & $42$ \\
		$6$ & $774$ \\
		$7$ & $14880$ \\
		$8$ & $286494$ \\
		$9$ & $5589430$ \\
		$10$ & $110450340$ \\
		$11$ & $2204567190$ \\
		$12$ & $44367133544$ \\
		$13$ & $899129253330$ \\
		$14$ & $18329906061720$ \\
		$15$ & $375594904657420$ \\
			$16$& $7730606559774972$ \\
			$17$& $159737623830269520$ \\
			$18$& $3312121208966044860$ \\
			$19$& $68888655803361851310$ \\
		\bottomrule
\end{tabular}
\end{footnotesize}
\end{minipage}

\clearpage


\addcontentsline{toc}{section}{References}


\end{document}